\newcommand{\B}{{\mathbb  B}}
\newcommand{\C}{{\mathbb  C}}
\renewcommand{\L}{{\mathcal L}}
\newcommand{\N}{{\mathbb  N}}
\newcommand{\OO}{{\mathcal O}}
\newcommand{\Q}{{\mathbb  Q}}
\newcommand{\R}{{\mathbb  R}}
\newcommand{\T}{{\mathbb  T}}
\newcommand{\Z}{{\mathbb  Z}}
\newcommand{\scalar}[2]{{\langle#1,#2\rangle}}
\newcommand{\Ker}{{\operatorname{Ker}}}
\newcommand{\Loneloc}{{L_{\text{loc}}^1}}
\newcommand{\Ltwoloc}{{L_{\text{loc}}^2}}
\newcommand{\Log}{{\operatorname{Log}}}
\newcommand{\PSH}{{\operatorname{{\mathcal{PSH}}}}}
\renewcommand{\Re}{{\operatorname{Re}}}
\newcommand{\Span}{{\operatorname{Span}}}
\newcommand{\supp}{{\operatorname{supp}\, }}
\newcommand{\USC}{{\operatorname{{\mathcal{USC}}}}}
\newcommand{\LSC}{{\operatorname{{\mathcal{LSC}}}}}
\def
\def\vfigura#1#2{
\setbox0\vbox{{
\input #1
}}
\setbox1\vbox{\hbox{\box0}\hbox{{\obeylines #2}}}
\dimen0 = -\ht1
\advance\dimen0 by-\dp1
\dimen1 = \wd1
\dimen2 = -\dimen0
\divide\dimen2 by\baselineskip
\count100 = 1
\advance\count100 by\dimen2
\advance\count100 by1
\box1
\hangindent\dimen1
\hangafter=-\count100
\vskip\dimen0
}
\numberwithin{equation}{section}
\newtheorem{theorem+}           {Theorem}      [section]
\newtheorem{definition+}  [theorem+]  {Definition}
\newtheorem{lemma+}  [theorem+]  {Lemma}
\newtheorem{corollary+}  [theorem+]  {Corollary}
\newtheorem{proposition+}  [theorem+]  {Proposition}
\newtheorem{example+}  [theorem+]  {Example}
\newtheorem{problem+}  [theorem+]  {Problem}
\newtheorem{remark+}  [theorem+]  {Remark}
\newenvironment{theorem}{\begin{theorem+}\sl}{\end{theorem+}\rm}
\newenvironment{lemma}{\begin{lemma+}\sl}{\end{lemma+}\rm}
\newenvironment{corollary}{\begin{corollary+}\sl}{\end{corollary+}\rm}
\newenvironment{proposition}{\begin{proposition+}\sl}{\end{proposition+}\rm}
\newenvironment{proof}{\medbreak\noindent{\bf  Proof:}\rm}{\hfill$\square$\rm}
\newenvironment{prooftx}[1]{\medbreak\noindent{\bf 
    #1:}\rm}{\hfill$\square$\rm}
\title{{\Large \bf 
Polynomials with exponents in  compact convex sets  and 
associated weighted extremal functions: \\
The Siciak-Zakharyuta theorem}}
\author{{Benedikt Steinar Magnússon, Álfheiður Edda Sigurðardóttir} \\
{and Ragnar Sigurðsson}}
\date{{}} 
\begin{document}
\maketitle

\begin{abstract} \noindent
The classical Siciak-Zakharyuta theorem states that the Siciak-Zakharyuta function $V_{E}$ of a subset $E$ of $\mathbb C^n$, also called a pluricomplex Green function or global exremal function of $E$, equals the logarithm of the Siciak function $\Phi_E$ if $E$ is compact. The Siciak-Zakharyuta function is defined as the upper envelope of functions in the Lelong class that are negative on $E$, and the Siciak function is the upper envelope of $m$-th roots of polynomials $p$ in $\mathcal{P}_m(\mathbb C^n)$ of degree $\leq m$ such that $|p|\leq 1$ on $E$. We generalize the Siciak-Zakharyuta theorem to the case where the polynomial space ${\mathcal P}_m(\mathbb C^n)$ is replaced by ${\mathcal P}_m^S(\mathbb C^n)$ consisting of all polynomials with exponents restricted to sets $mS$, where $S$ is a compact convex subset of $\mathbb R^n_+$ with $0\in S$. It states that if $q$ is an admissible weight on a closed set $E$ in $\mathbb C^n$ then $V^S_{E,q}=\log\Phi^S_{E,q}$ on $\mathbb C^{*n}$ if and only if the rational points in $S$ form a dense subset of $S$.

\medskip\par
\noindent{\em Subject Classification (2020)}: 
32U35. Secondary 32A08, 32A15, 32U15, 32W05.  
\end{abstract}

\section{Introduction}
\label{sec:01}

\noindent A polynomial in $n$ complex variables 
$p(z)=\sum_{\alpha\in  \N^n} a_\alpha z^\alpha$, $z\in \C^n$ is supported in a set $S\subset \R^n_+$ if $a_\alpha\neq 0$ implies that $\alpha \in S$. Given a compact $S\subset \R^n_+$ and $m\in \N$, let the space ${\mathcal P}^S_m(\C^n)$ consist  of  all polynomials 
$p$ supported in $mS$, of the form $$p(z)=\sum_{\alpha\in (mS)\cap \N^n} a_\alpha z^\alpha, \qquad z\in \C^n.$$
If $S$ is convex and $0\in S$ then ${\mathcal P}^S(\C^n)=\cup_{m\in \N}{\mathcal P}^S_m(\C^n)$ forms a graded polynomial ring where the degree of a polynomial $p$ is the minimal $m$ such that $p$ is supported in $mS$.  The standard grading is obtained when $S$ is the unit simplex $\Sigma$, the convex hull of the union of $0$ and the unit basis $\{e_1,\dots,e_n\}$. Polynomials with prescribed supports are often called sparse polynomials.

Bayraktar \cite{Bay:2017} introduces methods from pluripotential theory to find the zero distribution of systems of random sparse polynomials as their degree tends to infinty. That paper is the naissance of pluripotential theory with respect to convex bodies, further developed by Bayraktar, Bloom, Levenberg and their collaborators in \cite{BayBloLev:2018, BayBloLevLu:2019, BayHusLevPer:2020}. A self-contained exposition of the background to  this paper is found in Magnússon, Sigurðar\-dóttir, Sigurðsson and Snorrason \cite{MagSigSigSno:2023}.

A fundamental result from classical pluripotential theory is the Siciak-Zakharyuta theorem which states an equivalence between two extremal functions, 
$V_{E,q}=\log \Phi_{E,q}$, 
for every admissible weight $q:E\to\R\cup\{+\infty\}$ on a compact subset $E$ of $\C^n$, see Siciak \cite{Sic:1981}.
Here $V_{E,q}(z)$ is the pluricomplex Green function, defined as the supremum of functions $u\in \mathcal{PSH}(\C^n)$ satisfying a growth estimate $u\leq \log^+|\cdot|+c_u$ for some constant $c_u$ and such that $u|_E\leq q$. The Siciak function $\Phi_{E,q}$ is the supremum of $|p|^{1/m}$ where $p$ is an $m$-th degree polynomial satisfying $\|pe^{-mq}\|_E\leq 1$, with $\|\cdot\|_E$ denoting the $\sup$-norm on $E$. 
For a proof of the original Siciak-Zakharyuta theorem  with $q=0$ and a reference on plurpotential theory, see Klimek \cite{Kli:1991} and for a reference on the weighted theory see Bloom's Appendix B in \cite{SaffTotik:1997}.

\smallskip
The definition of $\Phi_{E,q}$ can be modified using the grading of the ring $\mathcal P^S(\C^n)$. Instead of regarding $|p|^{1/m}$ where $m$ is the standard degree of $p$, we take $m$ such that $p$ is supported in $mS$. For every function $q\colon E\to \R\cup\{+\infty\}$
defined on  $E\subseteq\C^n$  and
$m\in \N^*=\{1,2,3,\dots\}$ we define  Siciak functions with respect to $S$, $E$, $q$ and $m$ by
$$
\Phi^S_{E,q,m}(z)=\sup\{|p(z)|^{1/m}\,;\, p\in {\mathcal P}^S_m(\C^n),
\|pe^{-mq}\|_E\leq 1\}, \qquad z \in \C^n,
$$   
and the Siciak function with respect to $S$, $E$ and $q$ by 
$$\Phi^S_{E,q}=\varlimsup_{m\to\infty}\Phi^S_{E,q,m}.$$ 

To generalize the Siciak-Zakharyuta theorem we study an extremal function $V^S_{E,q}$ to match $\log \Phi^S_{E,q}$. To do so, the logarithmic growth of $V_{E,q}$ is replaced by growth that reflects the grading of $\mathcal{P}^S(\C^n)$. The supporting function $\varphi_S$ of $S$, $\varphi_S(\xi)=\sup_{s\in S}\scalar s\xi$ with $\xi\in \R^n,
$ is positively homo\-geneous and convex and determines the
set $S$ uniquely. The logarithmic
supporting function
$H_S\in \PSH(\C^n)$ of $S$ is defined by 
$$H_S(z)=\varphi_S(\log|z_1|,\dots,\log|z_n|)$$
on $\C^{*n}=(\C^*)^n$ and the definition is extended to the coordinate 
hyperplanes  by
$$
H_S(z)=\varlimsup_{\C^{*n}\ni w\to z}H_S(w), \qquad z\in \C^n\setminus
\C^{*n}.
$$
The class $\L^S(\C^n)$ 
is defined as the set of all $u\in \PSH(\C^n)$ satisfying a growth
estimate of the form $u\leq H_S+c_u$ for some constant $c_u$. 
The Siciak-Zakharyuta 
function  with respect to $S$, $E$ and $q$ is defined by 
$$
V^S_{E,q}(z)=\sup\{u(z)\,;\, u\in \L^S(\C^n),\, u|_E\leq q\},
\qquad z \in \C^n.
$$ 

A function $p\in \OO(\C^n)$ is in  ${\mathcal
  P}^S_m(\C^n)$ if and only if $\log|p|^{1/m}$ is in $\L^S(\C^n)$, by \cite{MagSigSigSno:2023}, 
  Theorem 3.6.  By \cite{MagSigSigSno:2023},  Proposition 2.2, $\log\Phi^S_{K,q}$ is the supremum of the subclass of all $u\in\L^S(\C^n)$ with $u|_K\leq q$ that are of the form $u=\log|p|^{1/m}$ for some $p\in \mathcal P^S_m(\C^n)$. This implies that $\log \Phi^S_{E,q}\leq V^S_{E,q}$. 

\smallskip

The standard simplex
$\Sigma$, 
which yields 
the standard grading of polynomials, 
has the supporting function 
$\varphi_\Sigma(\xi)=\max\{\xi_1^+\dots,\xi_n^+\}$
where $\xi_j^+=\max\{\xi_j,0\}$.  Its 
loga\-rithmic supporting function is 
$H_\Sigma(z)=\log^+\|z\|_\infty$, which implies that
the class   
$\L^\Sigma(\C^n)$ is the standard Lelong class $\L(\C^n)$. 
We drop the superscript $S$ in the case $S=\Sigma$ and
the subscript $q$ in the case $q=0$.

A function $q\colon E\to \R\cup\{+\infty\}$ 
is an
{\it admissible weight  with respect to $S$
on $E$} if 
\begin{enumerate}
    \item[\textbf{(i)}] $q$ is lower semi-continuous,
    \item[\textbf{(ii)}] the set  $\{z\in E \,;\, q(z)<+\infty\}$ is non-pluripolar, and
    \item[\textbf{(iii)}]  if $E$ is unbounded, $\lim_{z\in E, |z|\to \infty}(H_S(z)-q(z))=-\infty$.
\end{enumerate}
Condition \textbf{(i)} 
excludes any function with a negative pole, which otherwise might force $V^S_{E,q}$ to be the supremum over an empty family. Without condition \textbf{(ii)} there would exist an $u\in \mathcal{PSH}(\C^n)$ such that $u=-\infty$ on $\{z\in E \,;\, q(z)<+\infty\}$. Then $u+c\leq q$ on $E$ for every constant $c$ so if $u\in \L^S$ then $V^{S}_{E,q}(z)=+\infty$ for all $z\in \C^n$ such that $u(z)>-\infty$. Admissible weights ensure that the upper regularization 
$V^{S*}_{E,q}$ is a member of $\L^S(\C^n)$, see \cite{MagSigSigSno:2023}, Propositions 4.5 and 4.6. The condition \textbf{(iii)} additionally implies that the Siciak-Zakharyuta function satisfies a growth estimate $V^{S}_{E,q}\geq H_S-c$ for some constant $c$, which is used in \cite{MagSigSigSno:2023},  Proposition 4.6, to show that $V^S_{E,q}=V^S_{K,q}$ and
$\Phi^S_{E,q}=\Phi^S_{K,q}$ for some compact $K\subseteq E$. Therefore admissible weight allow us to study mostly $E=K$ compact.

\medskip
Exponents in $mS\cap \N^n$ can only appear as dilates of rational points from $S$ so ${\mathcal P}^S_m(\C^n)={\mathcal P}^{S'}_m(\C^n)$ 
for $S'=\overline{S\cap \Q^n}$.  If  $S\cap \Q^n$ does not form a dense subset of $S$, then there exists
a point $\xi\in \R^n$ for which $\varphi_{S'}(\xi)<\varphi_S(\xi)$.
For every admissible weight $q$ on a compact set $K$
the function  $z\mapsto H_S(z)-c$, where 
$c=\max_{w\in K}H_S(w)-\min_{w\in K}q(w)$,
is in $\L^S(\C^n)$ and $\leq q$ on $K$.  Hence $H_S-c\leq
V^S_{K,q}$.  We also have $V^{S'}_{K,q}\leq V^{S'*}_{K,q}\leq H_{S'}+c'$ for 
some constant $c'$. 
By homogeneity of the supporting functions
we have for $r>0$ sufficiently large  that 
$\varphi_{S'}(r\xi)+c' < \varphi_S(r\xi)-c$, 
so if we set $z=(e^{r\xi_1},\dots,e^{r\xi_n})$, then
$$
\log\Phi^{S}_{K,q}(z)=\log\Phi^{S'}_{K,q}(z)
\leq V^{S'}_{K,q}(z)\leq H_{S'}(z)+c'
< H_{S}(z)-c \leq V^S_{K,q}(z).
$$
To summarize, we have observed that if $S\cap\Q^n$ is not dense in
$S$, then for every ad\-miss\-ible weight $q$ on a compact set $K$ we have
$V^S_{K,q}\neq \log\Phi^{S}_{K,q}$. This necessary condition on $S$ is almost sufficient, as seen by our main result.

\begin{theorem}
  \label{thm:1.1} 
Let $S\subset \R^n_+$ be compact and convex with $0\in S$ and let 
$q$ be an admissible weight on a closed $E\subset \C^n$.   Then 
$V^S_{E,q}(z)=\log\Phi^S_{E,q}(z)$ for every $z\in \C^{*n}$ if and only if $S\cap \Q^n$ is dense in $S$. Furthermore, if $V^S_{E,q}$ is lower semi-continuous then the equality extends to all $z\in \C^n$.
\end{theorem}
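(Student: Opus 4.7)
The ``only if'' direction has already been established in the paragraph preceding the theorem, so throughout I assume $S\cap\Q^n$ is dense in $S$ and aim at the nontrivial inequality $V^S_{E,q}(z)\le \log\Phi^S_{E,q}(z)$ for $z\in\C^{*n}$. By admissibility of $q$ and Proposition 4.6 of \cite{MagSigSigSno:2023} one may replace $E$ by a compact $K\subseteq E$ with $V^S_{E,q}=V^S_{K,q}$ and $\Phi^S_{E,q}=\Phi^S_{K,q}$. The converse inequality $\log\Phi^S_{K,q}\le V^S_{K,q}$ is noted in the excerpt, so the task reduces to the following: given $u\in\L^S(\C^n)$ with $u|_K\le q$, a point $z_0\in\C^{*n}$, and $\epsilon>0$, produce $m\in\N^*$ and $p_m\in\mathcal{P}^S_m(\C^n)$ with $\|p_m e^{-mq}\|_K\le 1$ and $\tfrac1m\log|p_m(z_0)|\ge u(z_0)-\epsilon$.

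The natural route is a weighted Hörmander $L^2$ construction. After a slight smoothing $\tilde u$ of $u$, one solves $\bar\partial f=\bar\partial(\chi g)$ on $\C^n$, where $\chi$ is a smooth cut-off supported near $z_0$ and $g$ carries the desired value there, with plurisubharmonic weight
$$
\psi_m(z) \;=\; 2m\,\tilde u(z) \;+\; 2\delta\, H_S(z) \;+\; (n+1)\log(1+\|z\|^2).
$$
The standard $L^2$-estimate yields a holomorphic $f_m$ on $\C^n$ with (i) $|f_m(z_0)|\ge e^{m(u(z_0)-\epsilon)}$ after a local correction at $z_0$, (ii) $|f_m e^{-mq}|$ small on $K$ since $\tilde u\le u+o(1)\le q+o(1)$ there, and (iii) a global growth bound $|f_m(z)|\le C_\epsilon e^{m(H_S(z)+\epsilon)}$ coming from the $H_S$ and polynomial terms in $\psi_m$. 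The bound (iii), the characterization $p\in\mathcal{P}^S_m\iff \log|p|^{1/m}\in\L^S$ (Theorem 3.6 of \cite{MagSigSigSno:2023}), and the density of $S\cap\Q^n$ in $S$ together force $f_m\in\mathcal{P}^S_m$; a normalisation then produces the required $p_m$.

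The density hypothesis is used precisely at the last identification: without it one would only obtain $f_m\in\mathcal{P}^{S'}_m$ for $S'=\overline{S\cap\Q^n}\subsetneq S$, which is the very mechanism driving the ``only if'' direction. For the last assertion, once equality holds on $\C^{*n}$ the lsc hypothesis on $V^S_{E,q}$, combined with the always valid inequality $\log\Phi^S_{E,q}\le V^S_{E,q}$ and the density of $\C^{*n}$ in $\C^n$, permits a limit argument along $\C^{*n}$ at any hyperplane point and yields equality there.

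The principal obstacle is the Hörmander step: simultaneously imposing a sharp $H_S$-growth bound at scale $m$ (so that the Laurent/Taylor support of $f_m$ is confined to $mS\cap\N^n$), a prescribed value at $z_0$, and a smallness condition against $e^{-mq}$ on $K$, while keeping the weight $\psi_m$ plurisubharmonic and compatible with the $L^2$ theory. It is at this step that the sparse setting departs from the classical Siciak--Zakharyuta proof and where the density of $S\cap\Q^n$ is indispensable.
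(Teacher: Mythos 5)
Your overall strategy (reduce to a compact $K$, then run a weighted Hörmander $\bar\partial$-construction with weight scaling like $e^{-2m(\cdot)}$) matches the paper's, but there is a substantive gap: you have misplaced where the density of $S\cap\Q^n$ actually enters, and as a consequence your sketch cannot handle the case where $S$ has empty interior.

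In the paper, the density hypothesis is consumed entirely by a dimension reduction, Theorem~\ref{thm:1.2}. When $\dim S=\ell<n$, one first chooses a linear map $L\colon\R^\ell\to\R^n$ with $L(T)=S$ for a convex body $T\subset\R_+^\ell$, and then shows that the associated monomial map $F_L\colon\C^{*n}\to\C^{*\ell}$ induces bijections $\mathcal P^T_m(\C^\ell)\to\mathcal P^S_m(\C^n)$ and $\L^T(\C^\ell)\to\L^S(\C^n)$, pulling $V^T_{K',q'}$ and $\Phi^T_{K',q',m}$ back to $V^S_{K,q}$ and $\Phi^S_{K,q,m}$ on $\C^{*n}$. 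The construction of $L$ with $L(\Z^\ell)\subset\Z^n$ and $L^{-1}(\Z^n)=\Z^\ell$ (Lemmas~\ref{lem:3.1} and~\ref{lem:3.2}) uses precisely the rationality of a spanning set of $S$, i.e.\ density of $S\cap\Q^n$ in $S$. After this reduction, $S$ may be assumed a convex body; but for a convex body the rational points are \emph{automatically} dense, so your claim that the density is what forces $f_m\in\mathcal P^S_m$ at the end of the Hörmander argument cannot be right: once growth of order $e^{mH_S+O(\log(1+|z|))}$ is established, membership in $\mathcal P^S_m$ follows directly from Theorem~3.6 of \cite{MagSigSigSno:2023} without any further appeal to density. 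The density does no work in that step.

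The reason the reduction to a convex body is not optional is visible in the $\bar\partial$-weight itself. The paper's weight $\varphi$ contains a singular term $\varrho_0\log|z-z_0|+\scalar{t_0}{\Log z}+c_0$ whose plurisubharmonicity and domination by $H_S$ rest on the existence of a closed ball $\bar B(s_0,2\varrho_0)\subset S$ (inequality~\eqref{eq:6.8}). This ball is needed both to create the pole of $e^{-\varphi}$ at $z_0$ (forcing $u(z_0)=0$ so that $p(z_0)$ has exactly the prescribed value) and to absorb the logarithmic correction without increasing the growth exponent beyond $m$. If $S$ has empty interior there is no such ball, and your weight $\psi_m=2m\tilde u+2\delta H_S+(n+1)\log(1+\|z\|^2)$ fails at exactly this point: the $(n+1)\log(1+\|z\|^2)$ term pushes the growth above $mH_S$ in the directions where $H_S$ is affine, and the ``local correction at $z_0$'' is left unspecified, whereas the paper builds the singularity directly into the weight. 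You also skip the preparatory regularization steps (Lemma~\ref{lem:6.1} moving $K$ off the coordinate hyperplanes, and the reduction to continuous $q$ and continuous $V^S_{K,q}$ via Propositions~4.8 and~5.4 of \cite{MagSigSigSno:2023}), which are needed for the uniform estimates in Theorem~\ref{thm:5.2}. In short: the Hörmander step is in the right spirit, but the argument is incomplete without the explicit weight with a $2n\log|z-z_0|$ singularity and, above all, without the reduction to the full-dimensional case, which is where the density hypothesis is genuinely used.
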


The last statement follows from the fact that $\Phi^S_{K,q}$ is lower semi-continuous, as it is the supremum over a family of continuous functions. The only remaining problem is to characterize when $V^S_{E,q}$ is lower-semicontinuous. The supremum over a family of continuous plurisubharmonic functions is a lower-semicontinuous function, see \cite{Kli:1991}, Proposition 2.3.3. Therefore $V^S_{K,q}$ is lower semi-continuous whenever any $u\in \L^S(\C^n)$ with $u\leq q$ on $K$ can be approximated by a continuous member of the same family. Therefore identifying lower-semicontinuous $V^S_{K,q}$ can be done by finding appropriate regularizing methods that preserve the class $\L^S(\C^n)$, see the discussion in Section 5 of \cite{MagSigSigSno:2023}.
When $S$ is a lower set, and only then, can we use the traditional regularizing method of convolution with a smoothing kernel, by \cite{MagSigSigSno:2023}, Theorem 5.8. When $S$ is assumed to contain a neighborhood of zero, the method of Ferrier approximation can be employed, see Section 3 of \cite{BayHusLevPer:2020}, so in that case $V^S_{K,q}$ is lower-semicontinuous. 

\smallskip

Bayraktar, Hussung, Levenberg and Perera 
state in Theorem 1.1 of \cite{BayHusLevPer:2020}
that for every convex set $S\subset \R^n_+$ that contains a neighborhood of $0$ and $q$ an admissible weight on a closed $E$, then $\log \Phi ^S_{E,q} =V^S_{E,q}$. Their proof is based on Proposition 4.3 in \cite{BayHusLevPer:2020} which, 
as far as we can see,  only holds if $S$ is a lower set, that is if $[0,s_1]\times \cdots \times [0,s_n]\subset S$ for every $s\in S$. A counterexample to its claim is found in \cite{MagSigSigSno:2023}, Example 7.4. 
The statement of Theorem 1.1 in \cite{BayHusLevPer:2020} is however valid
as it follows from our theorem. 

\smallskip

Previous works \cite{BayBloLev:2018, BayBloLevLu:2019, BayHusLevPer:2020} refer to this subject as \emph{Pluripotential theory and convex bodies}, as the support $S$ is throughout assumed to be a convex body, that is with non-empty interior. With the goal of relaxing the conditions on $S$ as much as possible, we study separately the case when $S$ has empty interior. 
If $S$ has empty interior but  is convex and contains zero, then $S$ lies in some subspace of dimension $\ell<n$. Therefore $S$ is the image of some linear $L$ map of some convex set $T$ in $\R^\ell$. The next theorem asserts that this linear map $L$ can be chosen so that $T$ is a convex body in $\R^\ell_+$ and how the extremal functions with respect to $S$ and $T$ are related. This result gives us a reduction  of the proof of Theorem \ref{thm:1.1} to  the case when $S$ is a convex body.

\medskip 
\begin{theorem}
\label{thm:1.2}
Let $S\subset \R^n_+$ be compact and convex with $0\in S$.
Assume that $S$ is of dimension $\ell\leq n$ and that 
$S\cap \Q^n$ is dense in $S$.  Then there exists
a linear map $L\colon \R^\ell\to \R^n$ and 
a compact convex subset $T$ of $\R^\ell_+$ with $0\in T$ 
such that $L(T)=S$, $L(e_{k})\in \Z^n$ for ${k}=1,\dots,\ell$ and 
$L^{-1}(\Z^n)=\Z^\ell$.  Let ${F}_L\colon \C^{*n} \to \C^{*\ell}$
denote the holomorphic map given by
\begin{equation}
  \label{eq:1.1}
  {F}_L(z)=(z^{L(e_1)},\dots,z^{L(e_\ell)}), \qquad z\in \C^{*n}. 
\end{equation}
Then for all $m\in \N$ there are well-defined bijective pullbacks 
\begin{equation}
  {F}_L^*\colon \mathcal{P}^T_m(\C^\ell)\to \mathcal P ^S_m(\C^n), \qquad {F}_L^*\colon \L^T(\C^\ell)\to \L^S(\C^n) \label{eq:1.2}
\end{equation}
 and for every admissible weight function $q$ on a compact subset $K$ of $\C^{n}$ and $m\in \N^*$
\begin{align}\Phi^S_{K,q,m}&={F}_L^*\big(\Phi^T_{K',q',m}\big)
  \qquad \text{on }\C^{*n}, \quad \text{and} \label{eq:1.3}\\
V^S_{K,q}&={F}_L^*\big( V^T_{K',q'}\big)  \qquad \text{on }\C^{*n},\label{eq:1.4} 
\end{align}
where $K'={F}_L(K)$ and  $q'(w)={F}_{L*}q(w)=\inf\{q(z)\,;\, z\in
{F}_L^{-1}(w)\cap K\}$. 
\end{theorem}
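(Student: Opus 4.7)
\emph{Construction and polynomial bijection.} The plan is to prove this in three stages. First, density of $S \cap \Q^n$ in $S$ together with $\dim S = \ell$ implies $V := \operatorname{span}_{\R} S$ is a rational $\ell$-dimensional subspace of $\R^n$, so $\Lambda := V \cap \Z^n$ is a rank-$\ell$ lattice, saturated in $\Z^n$ since $V$ is a linear subspace. The decisive geometric step is to choose a $\Z$-basis $v_1,\dots,v_\ell$ of $\Lambda$ whose positive cone contains $S$. I will argue this is possible because $S$ lies in the pointed rational polyhedral cone $V \cap \R^n_+$, and any such cone is contained in a simplicial cone generated by a $\Z$-basis of $\Lambda$; concretely, I enclose $S$ in a rational polyhedral $C \subset V$, pick a primitive lattice point $u_0$ in the interior of $C^* \subset V^*$, extend $u_0$ to a $\Z$-basis of $\Lambda^*$, and translate each of the other basis vectors by a large multiple of $u_0$ to push it into $C^*$; the dual basis of $\Lambda$ in $V$ then has the required property. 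Setting $L(e_k) = v_k$ and $T := L^{-1}(S)$, all listed properties (in particular $L^{-1}(\Z^n) = \Z^\ell$, which follows from $L(\Z^\ell) = \Lambda = V \cap \Z^n$, together with $L(T) = S$ and $T \subset \R^\ell_+$) are immediate. Then $F_L(z) = (z^{v_1},\dots,z^{v_\ell})$ is a holomorphic surjection $\C^{*n} \to \C^{*\ell}$, and for $p(w) = \sum_\beta a_\beta w^\beta \in \mathcal P^T_m(\C^\ell)$ one has $(F_L^*p)(z) = \sum_\beta a_\beta z^{L(\beta)}$; the map $\beta \mapsto L(\beta)$ is a bijection $mT \cap \N^\ell \to mS \cap \N^n$ (using $L(mT) = mS$, $L^{-1}(\Z^n) = \Z^\ell$, and $T \subset \R^\ell_+$), yielding the polynomial bijection in \eqref{eq:1.2}.

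\emph{Bijection on the Lelong class.} The pivotal calculation,
\begin{equation*}
  H_T(F_L(z)) = \sup_{t \in T}\sum_k t_k\langle v_k, \log|z|\rangle = \sup_{t \in T}\langle L(t), \log|z|\rangle = \sup_{s \in S}\langle s, \log|z|\rangle = H_S(z)
\end{equation*}
on $\C^{*n}$, immediately gives $F_L^*u \leq H_S + c_u$ whenever $u \leq H_T + c_u$, so after upper-semicontinuous extension across the coordinate hyperplanes $F_L^*$ sends $\L^T(\C^\ell)$ into $\L^S(\C^n)$. The hard direction is surjectivity: given $\tilde u \in \L^S(\C^n)$, I must show $\tilde u$ descends through $F_L$. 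Let $G := \ker F_L \subset \C^{*n}$; its character group is $\Z^n/\Lambda$, torsion-free by saturation of $\Lambda$, so $G$ is a \emph{connected} subtorus of $\C^{*n}$. For $w$ in the Lie algebra of $G$ (equivalently $w \in V^\perp \otimes \C$), the orbit $\zeta \mapsto (e^{\zeta w_j}z_j)_j$ fixes $F_L$, and the same calculation above (since $\operatorname{Re} w, \operatorname{Im} w \in V^\perp$) shows $H_S$ is constant along it. Hence $\zeta \mapsto \tilde u(e^{\zeta w}\cdot z)$ is subharmonic on $\C$ and bounded above by $H_S(z) + c_{\tilde u}$, so Liouville's theorem forces it to be constant. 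Connectedness of $G$ then implies $\tilde u$ is $G$-invariant and factors as $\tilde u = u \circ F_L$ for a unique psh $u$ on $\C^{*\ell}$, which after the usual upper-regularization extends to the required element of $\L^T(\C^\ell)$.

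\emph{Weight identities and main obstacle.} Both \eqref{eq:1.3} and \eqref{eq:1.4} follow from the respective bijections by splitting a supremum along the fibers of $F_L$: for $p = F_L^*p' \in \mathcal P^S_m$ with $p' \in \mathcal P^T_m$,
\begin{equation*}
  \|pe^{-mq}\|_K = \sup_{w \in K'}|p'(w)|\,e^{-m\inf\{q(z)\, :\, z \in K \cap F_L^{-1}(w)\}} = \|p'e^{-mq'}\|_{K'},
\end{equation*}
so the defining constraints of $\Phi^S_{K,q,m}$ and $\Phi^T_{K',q',m}$ correspond and taking suprema yields \eqref{eq:1.3}; the identical fiber-wise argument converts $\tilde u|_K \leq q$ into $u|_{K'} \leq q'$ and gives \eqref{eq:1.4}. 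The main obstacle will be the descent on $\L^S$ in the second stage, which combines connectedness of $G$ (from saturation of $\Lambda$, i.e.\ the content of $L^{-1}(\Z^n) = \Z^\ell$) with the Liouville-style argument via $H_S$-invariance along fibers. The lattice-basis construction in the first stage is also delicate, though it is essentially a standard application of the geometry of rational polyhedral cones.
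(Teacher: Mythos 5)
Your proposal is correct and shares the overall skeleton of the paper's argument — construct $L$ from a well-chosen $\Z$-basis of the lattice $\Lambda = W\cap\Z^n$, show $F_L^*$ is bijective on graded polynomial pieces and on Lelong classes, and derive \eqref{eq:1.3}--\eqref{eq:1.4} by tracking the defining constraints along fibers — but two of the key steps are carried out by a genuinely different method. For the lattice basis, the paper (Lemma 3.1) starts from an arbitrary integral generating set $g_1,\dots,g_\ell$ of $\Lambda$, passes to the dual cone $\Gamma^\circ$, and runs a descent on the number of lattice points in a fundamental parallelepiped to manufacture a $\Z$-basis of $\Z^\ell$ inside $\Gamma^\circ$; you instead enclose the pointed rational cone $V\cap\R^n_+$ in a simplicial cone whose dual generators form a $\Z$-basis of $\Lambda^*$, produced by extending a primitive interior lattice point of the dual cone to a basis and applying a unimodular shear. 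Your route is more conceptual (standard toric reasoning, double-dual of closed convex cones) while the paper's is more elementary and self-contained. For the surjectivity of $F_L^*\colon\L^T\to\L^S$, the paper explicitly parametrizes each fiber of $F_L$ as an $(n-\ell)$-dimensional torus via an $n\times n$ integer matrix (Lemma 4.1, aided by the Smith-normal-form Lemma 3.2) and applies Liouville on $\C^{n-\ell}$; you apply one-variable Liouville on each one-parameter subgroup of $G=\ker F_L$ and close by connectedness of $G$, which you correctly reduce to saturation of $\Lambda$ (i.e.\ to $L^{-1}(\Z^n)=\Z^\ell$). Same idea, different packaging; your route avoids Smith normal form and the explicit fiber matrix bookkeeping, at the cost of quoting some basic torus/Lie theory. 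The fiber-wise computation you use to pass from $\|p e^{-mq}\|_K$ to $\|p'e^{-mq'}\|_{K'}$ and from $\tilde u|_K\le q$ to $u|_{K'}\le q'$ matches the paper's Section 2 bookkeeping.

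One step you leave compressed and should expand: once $\tilde u$ is shown constant on each fiber of $F_L$, you assert that it ``factors as $\tilde u = u\circ F_L$ for a unique psh $u$ on $\C^{*\ell}$.'' Well-definedness of $u$ uses surjectivity of $F_L$ (which you assert but do not prove; it follows either from $L^*$ being surjective, as in the paper, or from the image being a connected full-dimensional subgroup of $\C^{*\ell}$). Plurisubharmonicity of $u$ is not automatic: one needs that $F_L$, being a surjective holomorphic submersion of constant rank, admits a local holomorphic section $s$ near each $w_0\in\C^{*\ell}$, so that $u=\tilde u\circ s$ locally is psh. The paper makes exactly this point before invoking the growth bound $u\le H_T+c_{\tilde u}$ to extend $u$ across the coordinate hyperplanes into $\L^T(\C^\ell)$; with that inserted your argument closes.
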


Magnússon, Sigurðsson and Snorrason continue the discussion in \cite{MagSigSno:2023}, where a Bernstein-Walsh-Siciak theorem on approximation by polynomials in $\mathcal{P}^S(\C^n)$ is proved.

\subsection*{Acknowledgment}  
The results of this paper are a part of a research project, 
{\it Holomorphic Approximations and Pluripotential Theory},
with  project grant 
no.~207236-051 supported by the Icelandic Research Fund.
We would like to thank the Fund for its support and the 
Mathematics Division, Science Institute, University of Iceland,
for hosting the project.   
We thank Sigurður Jens Albertsson and
Atli Fannar Franklín
for helpful discussions.  
We thank Bergur Snorrason for many helpful discussions and 
a careful reading of the manuscript.   We thank the referree for asking the correct questions, leading to improvements of the paper.

\section{Convex sets $S$ of lower affine dimension}
\label{sec:02}

If $S\subset \R^n$ is
convex with $0\in S$ and has empty interior then the smallest subspace $W$ of $\R^n$ that
contains $S$ is a proper subspace of $\R^n$.  For any linear map
$L\colon \R^\ell\to \R^n$ whose image is $W$ the set $T=L^{-1}(S)$ is 
convex with non-empty interior.   Before we make an explicit
choice of the linear map $L$, we show that in general $L$ induces  a
map between the polynomial rings ${\mathcal P}^T(\C^\ell)$
and ${\mathcal P}^S(\C^n)$ and the Lelong classes
$\L^T(\C^\ell)$ and $\L^S(\C^n)$.

\smallskip
Let $S$ be a compact convex subset of $\R^n_+$ with $0\in S$.
Let $L\colon \R^\ell\to \R^n$ be a linear map and assume that
$S=L(T)$, where $T$ is a compact convex subset of $\R^\ell_+$ with 
$0\in T$.  The relation between $\varphi_S$ and $\varphi_T$ is
given by 
\begin{equation}
  \label{eq:2.1}
  \varphi_S(\xi)=\sup_{t\in T}\scalar{L(t)}\xi
=\sup_{t\in T}\scalar t{L^*(\xi)}=\varphi_T(L^*(\xi)),
\end{equation}
where $L^*\colon \R^n\to \R^\ell$ denotes the adjoint  of $L$.
If $\eta_{k}=L(e_{k})\in \Z^n$ for ${k}=1,\dots,\ell$ then we have 
a well defined rational  map
\begin{equation}
  \label{eq:2.2}
{F}_L\colon \C^{*n}\to \C^{*\ell}, \qquad {F}_L(z)=(z^{\eta_1},\dots,z^{\eta_\ell}).
\end{equation}
Observe that for every $t\in \R^\ell$ and $z\in \C^{*n}$ we have
\begin{equation}
  \label{eq:2.3}
  \scalar t{L^*(\Log z)}
=\sum_{j=1}^n\sum_{{k}=1}^\ell t_{k} \eta_{{k},j}\log|z_j|
=\scalar t{\Log\, {F}_L(z)},
\end{equation}
so by  (\ref{eq:2.1}) we have 
\begin{equation}
  \label{eq:2.4}
  H_S(z)=\varphi_T(\Log({F}_L(z)))
=H_T({F}_L(z)), \qquad z\in \C^{*n}.
\end{equation}
By continuty this gives  a relation between $H_S$ and $H_T$,
\begin{equation}
  \label{eq:2.5}
  H_S={F}_L^*H_T
\end{equation}
and a pullback
\begin{equation}
  \label{eq:2.6}
    {F}_L^*\colon \L^T(\C^\ell)\to \L^S(\C^n).
\end{equation}
If $p\in {\mathcal P}^T_m(\C^\ell)$ with
$p(w)=\sum_{\beta\in mT}b_\beta w^\beta$,  then
\begin{equation}
  \label{eq:2.8}
  {F}_L^*p(z)=\sum_{\beta\in mT}b_\beta \big({F}_L(z)\big)^\beta
=\sum_{\beta\in mT}b_\beta\big(z^{\beta_1L(e_1)}\cdots 
z^{\beta_\ell L(e_\ell)}\big)
=\sum_{\beta\in mT}b_\beta z^{L(\beta)}.
\end{equation}
Since $L(mT)=mS$ this equation shows that 
we have a pullback
\begin{equation}
  \label{eq:2.9}
 {F}_L^*\colon {\mathcal P}^T_m(\C^\ell)  \to  {\mathcal P}^S_m(\C^n) 
\end{equation}
which is surjective if $L(mT\cap \Z^\ell)=mS\cap \Z^n$.

\medskip
We need an appropriate weight on ${F}_L(K)\subset \C^\ell$. 
We have natural definitions of push-forwards $F_*$ of the classes
$\USC(K)$ and $\LSC(K)$ of upper and lower semi-continuous functions
by a continuous map $F\colon K\to \C^\ell$
such that $F_*\colon \USC(K)\to \USC(F(K))$ and $F_*\colon \LSC(K)\to
\LSC(F(K))$, but their definitions are different. The function
$w\mapsto \sup\{u(z)\,;\, z\in F^{-1}(w)\}$ 
is in $\USC(F(K))$ for every $u\in \USC(K)$
and $w\mapsto \inf\{u(z)\,;\, z\in F^{-1}(w)\}$ 
is in $\LSC(F(K))$ for every
$u\in \LSC(K)$.  In this context it is natural to use the second
definition of $F_*$:

\begin{proposition}
  \label{prop:2.1}
Let $q\colon K\to \R\cup\{+\infty\}$ be an admissible weight
on a compact subset $K$ of $\C^n$ and  $F\colon U\to \C^\ell$ be an
open holomorphic map defined on some neighborhood of $K$.  
Then  the push-forward $F_*q\colon F(K)\to 
\R\cup\{+\infty\}$ of $q$ by $F$, given as 
\begin{equation*}
  F_*q(w)=\inf\{q(z) \,;\, z\in F^{-1}(w)\cap K\}, \qquad
  w\in F(K),
\end{equation*}
is an admissible weight on $F(K)$.
\end{proposition}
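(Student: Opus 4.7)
The plan is to verify the three admissibility conditions (i), (ii), (iii) for $F_*q$ on $F(K)$, exploiting compactness and the open-mapping hypothesis where needed. I would first observe that $F(K)$ is compact, being the continuous image of a compact set, so condition (iii) is vacuous and nothing needs to be checked there.

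For condition (i), lower semi-continuity of $F_*q$, I would argue directly from sequential LSC. The key observation is that for every $w\in F(K)$ the fibre $F^{-1}(w)\cap K$ is compact and non-empty, so the lower semi-continuous function $q$ attains its infimum there: there exists $z_w\in F^{-1}(w)\cap K$ with $q(z_w)=F_*q(w)$. Given $w_n\to w$ in $F(K)$, pick attaining points $z_n\in F^{-1}(w_n)\cap K$; by compactness of $K$ pass to a subsequence $z_{n_k}\to z_\infty\in K$. Continuity of $F$ gives $F(z_\infty)=w$, and lower semi-continuity of $q$ gives $F_*q(w)\le q(z_\infty)\le \liminf_k q(z_{n_k})=\liminf_k F_*q(w_{n_k})$, which yields the required inequality.

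Condition (ii) is the main obstacle and is where the openness of $F$ is essential. The set-theoretic identity
\[
\{w\in F(K):F_*q(w)<+\infty\}=F\bigl(\{z\in K:q(z)<+\infty\}\bigr)
\]
is immediate from the definition of $F_*q$. Writing $E_0:=\{z\in K:q(z)<+\infty\}$, which is non-pluripolar by admissibility of $q$, the task reduces to showing that an open holomorphic map sends non-pluripolar sets to non-pluripolar sets. I would argue this by contrapositive: if $F(E_0)$ were pluripolar in $\C^\ell$, then (by Josefson's theorem) there exists $v\in\mathcal{PSH}(\C^\ell)$ with $v\not\equiv -\infty$ and $v=-\infty$ on $F(E_0)$. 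The pullback $u=v\circ F$ is plurisubharmonic on $U$ and equals $-\infty$ on $E_0$. Openness of $F$ makes $F(U)$ an open subset of $\C^\ell$; since the pluripolar set $\{v=-\infty\}$ contains no non-empty open set, $v$ is not identically $-\infty$ on $F(U)$, hence $u\not\equiv -\infty$ on $U$. This exhibits $E_0$ as a subset of the pluripolar set $\{u=-\infty\}$, contradicting non-pluripolarity of $E_0$.

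Combining the three verifications establishes that $F_*q$ satisfies all conditions of an admissible weight on the compact set $F(K)$. The only delicate point is (ii), where one must remember both Josefson's global representation of pluripolar sets and, crucially, that openness of $F$ is what prevents the pullback of a non-trivial potential from collapsing to $-\infty$; without the open-mapping hypothesis the statement would fail in general.
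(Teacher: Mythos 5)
Your proof is correct and follows essentially the same route as the paper: a sequential argument for lower semi-continuity (the paper uses $\varepsilon_j$-approximate minimizers rather than exact ones, but this is immaterial), followed by the observation that openness of $F$ preserves non-pluripolarity, and the remark that (iii) is vacuous for compact $F(K)$. The one place you go beyond the paper is that you actually supply the standard proof that an open holomorphic map sends non-pluripolar sets to non-pluripolar sets, whereas the paper simply invokes this fact; your argument there is sound, with the small caveat that $u=v\circ F$ is plurisubharmonic only on $U$, so concluding $E_0$ is pluripolar requires either the local definition of pluripolarity or one more appeal to Josefson's theorem — worth making explicit.
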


\begin{proof}  Take $w\in F(K)$ and  $w_j\to w$ such that
$\lim_{j\to \infty} F_*q(w_j)=\varliminf_{\zeta\to w}F_*q(\zeta)$
and $\varepsilon_j\searrow 0$.  Then there exists
$z_j\in F^{-1}(w_j)\cap K$ such that $q(z_j)\leq F_*q(w_j)+\varepsilon_j$.  
Let $(z_{j_k})$ be a subsequence of $(z_j)$ converging to $z\in K$.  
Since $q$ is lower semi-continuous we have  
\begin{align*}
F_*q(w)&\leq q(z) \leq \varliminf_{k\to \infty} q(z_{j_k})
\leq \lim_{k\to \infty} F_*q(w_{j_k}) +\varepsilon_{j_k}=\varliminf_{\zeta\to w}F_*q(\zeta).
\end{align*}
Hence $F_*q$ is lower semi-continuous on $F(K)$.
Let $A=\{z\in K\,;\, q(z)<+\infty\}$. 
Since $A$ is non-pluripolar and $F$ is open, it follows that
$F(A)  \subseteq \{w\in F(K) \,;\,
F_*q(w)<+\infty\}$ are non-pluripolar. Hence $F_*q$ is admissible. 
\end{proof}

\bigskip

Let $K\subseteq \C^n$ be compact and $q$ an admissible weight on $K$.
For ease of notation, write $K'={F}_L(K)$ and $q'={F}_{L*}q$. We have ${F}_L^*q'
\leq q$, so if $v\in \PSH(\C^\ell)$ and
$v\leq q' 
$ on $K'$, 
then $u={F}_L^*v$ satisfies $u\leq q$ on $K$. Hence
\begin{equation}
  \label{2.7}
{F}_L^*\big( V^T_{K',q'}\big) 
\leq V^S_{K,q}  
\end{equation}
and equality holds if (\ref{eq:2.6})
is surjective. If $p \in\mathcal P^S_m (\C^n)$ and $\log|p|^{1/m}\leq q'$  
then  $\log|{F}_L^*p|^{1/m}\leq q$ on $K$.  Hence
\begin{equation}
  \label{eq:2.10}
  {F}_L^*\big(\Phi^T_{K',q',m}\big) \leq \Phi^S_{K,q,m},
  \qquad m\in \N,
\end{equation}
and equality holds if the pullback (\ref{eq:2.9}) is surjective.

\section{Preliminary algebra}
\label{sec:03}

In order to prove Theorem \ref{thm:1.2}
we show that there exists
a linear map $L\colon \R^\ell\to \R^n$ which maps 
a compact convex subset $T$ of $\R^\ell_+$ with $0\in T$ onto $S$
such that the two pullbacks (\ref{eq:2.6}) and (\ref{eq:2.9})
are surjective.  For that purpose we need some preliminary results.

\begin{lemma}
  \label{lem:3.1}
Let  $W$ be a subspace of $\R^n$ of dimension $\ell$ spanned by
vectors in $\Q^n$. Then there exists a linear map $L\colon \R^\ell\to
\R^n$  such that  $L(\R^\ell)=W$,  $L(\Z^\ell)\subseteq \Z^n$, $L^{-1}(\Z^n)=\Z^\ell$,
and $L^{-1}(\R^n_+)\subseteq \R^\ell_+$.
\end{lemma}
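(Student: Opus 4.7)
The plan is to build $L$ in two stages: first a naive integer map $L_0$ attached to a $\Z$-basis of the lattice $W\cap\Z^n$, and then a post-composition with an element of $GL_\ell(\Z)$ chosen to enforce the positivity condition $L^{-1}(\R^n_+)\subseteq\R^\ell_+$. Since $W$ is spanned by vectors in $\Q^n$, the group $\Lambda:=W\cap\Z^n$ is discrete in $W$ and contains $\ell$ $\Q$-linearly independent vectors (obtained by clearing denominators in a $\Q$-basis of $W$), hence is a free abelian group of rank exactly $\ell$. I pick any $\Z$-basis $u_1,\dots,u_\ell$ of $\Lambda$ and set $L_0(e_k):=u_k$. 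Then $L_0(\R^\ell)=W$, $L_0(\Z^\ell)=\Lambda\subseteq\Z^n$, and $L_0^{-1}(\Z^n)=\Z^\ell$, so the first three conclusions of the lemma hold for $L_0$ automatically; only the fourth may fail.

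To correct this, I consider the rational polyhedral cone $C':=L_0^{-1}(\R^n_+)\subseteq\R^\ell$. Since $L_0$ is a linear isomorphism onto $W$ and $\R^n_+$ contains no line, $C'$ is pointed, so by standard convex duality the dual cone $C'^\vee=\{\lambda\in\R^\ell\,;\,\scalar\lambda t\geq 0\text{ for all }t\in C'\}$ has nonempty interior in $\R^\ell$. The strategy is to find integer vectors $\lambda_1,\dots,\lambda_\ell\in C'^\vee$ that form a $\Z$-basis of $\Z^\ell$, assemble them as the rows of a matrix $A\in GL_\ell(\Z)$, and take $L:=L_0\circ A^{-1}$. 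The three properties already secured for $L_0$ then transfer to $L$, since $A^{-1}$ is a $\Z$-automorphism of $\Z^\ell$, while $L^{-1}(\R^n_+)=A(C')\subseteq\R^\ell_+$ is immediate: each coordinate of $Av$ equals $\scalar{\lambda_i}v\geq 0$ for $v\in C'$.

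The main obstacle is therefore the purely lattice-theoretic statement that a pointed rational cone $C'\subseteq\R^\ell$ admits a $\Z$-basis of $\Z^\ell$ lying in its dual. I would establish this by first picking a primitive lattice vector $\lambda_1$ in the (nonempty open rational) interior of $C'^\vee$, which exists because any rational point in that open cone can be scaled to $\Z^\ell$ and then divided by the gcd of its entries. I then extend $\lambda_1$ to a $\Z$-basis $(\lambda_1,w_2,\dots,w_\ell)$ of $\Z^\ell$ using the standard fact that any primitive integer vector fits into a $\Z$-basis, and replace each $w_i$ by $\lambda_i:=w_i+N\lambda_1$ for one common large integer $N$. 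Because $\lambda_i/N\to\lambda_1\in\text{int}(C'^\vee)$, for $N$ large enough all $\lambda_i$ lie in $C'^\vee$; and because the passage from $(\lambda_1,w_2,\dots,w_\ell)$ to $(\lambda_1,\lambda_2,\dots,\lambda_\ell)$ consists of unimodular elementary operations, the new tuple is again a $\Z$-basis of $\Z^\ell$. Taking $A$ to be the matrix with rows $\lambda_1,\dots,\lambda_\ell$ completes the construction.
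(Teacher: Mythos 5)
Your proposal is correct, and it shares the paper's overall strategy of first building a ``naive'' map from a $\Z$-basis $u_1,\dots,u_\ell$ of the lattice $W\cap\Z^n$ and then post-composing with a unimodular matrix whose rows lie in the dual of the pulled-back positivity cone. Where you diverge is in the construction of that unimodular matrix. The paper starts from $\ell$ explicit linearly independent integer vectors in the dual cone (coordinate functionals pulled back by $M^*$) and then runs an iterative reduction argument on the fundamental parallelepiped $P(a_1,\dots,a_\ell)$: at each step it replaces one generator by a nonzero lattice point of the parallelepiped, constructs an explicit injection showing the number of lattice points strictly drops, and stops when the only lattice point is $0$, i.e.\ when the vectors form a $\Z$-basis of $\Z^\ell$ still lying in the (convex) dual cone. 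You instead observe that the primal cone $C'=L_0^{-1}(\R^n_+)$ is pointed (since $L_0$ is injective and $\R^n_+$ contains no line), so $C'^\vee$ has nonempty interior; you pick a primitive lattice vector $\lambda_1$ in that interior, extend it to any $\Z$-basis by the standard completion lemma, and then shear the remaining basis vectors by $w_i\mapsto w_i+N\lambda_1$ for a single large $N$, using openness of the interior and the cone property to land all of them in $C'^\vee$ without leaving $GL_\ell(\Z)$. Your route is shorter and more conceptual, replacing the paper's bespoke parallelepiped-counting descent with two standard facts (duality for pointed cones; primitive vectors extend to $\Z$-bases) plus a shearing trick; the paper's route is more elementary and self-contained, never invoking the dual-cone interior or the completion lemma, at the cost of a more intricate combinatorial argument.
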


\begin{proof}  The group $W\cap \Z^n$ is a subgroup of $\Z^n$ and is
therefore isomorphic to $\Z^\nu$ for some $\nu\leq n$. It is
generated by $\nu$ elements $g_1,\dots,g_\nu$ which
are independent, so $\nu\leq \ell$.  In order to prove that 
$\nu=\ell$ we observe first that since 
$W$  is spanned by vectors in $\Q^n$ it is sufficient to 
show that  that $g_1,\dots,g_{\nu}$ span $W\cap \Q^n$ over $\Q$.
Take $x\in W\cap \Q^n$ and $\lambda\in \N^*$
such that  $\lambda x\in \Z^n$.  Then $\lambda x\in W\cap \Z^n$ which
gives  $\lambda x=x_1g_1+\cdots+x_{\nu} g_{\nu}$,  with $x_{k}\in \Z$.
Hence  $x=(x_1/\lambda)g_1+\cdots+(x_{\nu}/\lambda)g_{\nu}$ and $\nu=\ell$.  

\smallskip

Let $M\colon \R^\ell\to \R^n$ be defined by $M(e_k)=g_k$, $k=1,\dots,\ell$
and let $\Gamma=M^{-1}(\R^n_+)$.  
Since the generators are linearly independent $M$ is injective 
and consequently the adjoint $M^*$ is surjective.   
Hence we can find indices  $j_1,\dots,j_\ell$ such that
$a_{k}=M^*(e_{j_{k}})$, ${k}=1,\dots,\ell$ span $\R^\ell$.
The formula for $M^*$ is $M^*(y)=(\scalar{g_1}y,\dots,\scalar{g_\ell}y)$
so $a_{k}\in \Z^\ell$ for ${k}=1,\dots,\ell$. 
The dual cone $\Gamma^\circ=\{y\in \R^\ell\,;\, \langle y,x\rangle\geq 0\text{ for all }x\in \Gamma\}$ contains $a_k$ because if $x\in \Gamma$ then
$Mx\in \R^n_+$ and 
$\scalar x{a_{k}}=\scalar{Mx}{e_{j_{k}}}\geq 0$. Since the vectors
$a_{k}$ are linearly independent, $\Gamma^\circ$ is the closed
cone spanned by $a_1,\dots,a_\ell$. 

\smallskip

Define
$P(\xi_1,\dots,\xi_\ell)=\big\{\sum_{{k}=1}^\ell \lambda_{k}\xi_{k} 
\,;\, \lambda_{k}\in [0,1[, {k}=1,\dots,\ell\big\}$
for $\xi_1,\dots,\xi_\ell\in \R^\ell$.  
We will now show that there exist linearly independent
$\xi_1,\dots,\xi_\ell\in \Z^\ell\cap \Gamma^\circ$ such that  
$P(\xi_1,\dots,\xi_\ell)\cap \Z^\ell=\{0\}$. If $P(a_1,\dots,a_\ell)\cap \Z^\ell=\{0\}$ we are done.
Assume that  $0\neq x\in P(a_1,\dots,a_\ell)\cap \Z^\ell$,
$x=\lambda_1a_1+\cdots+\lambda_{\ell} a_{\ell}$ with 
$\lambda_{k}\in [0,1[$ and $\lambda_m\neq 0$.  
We form a new sequence $a_1^{(1)},\dots,a_\ell^{(1)}$ in $\Z^\ell$ 
by setting  $a_{k}^{(1)}=a_{k}$ for ${k}\neq m$ and
$a_m^{(1)}=x$.  After renumbering $a_1,\dots, a_\ell$  we may
assume that $m=1$.

The vectors $a_1^{(1)},\dots,a_\ell^{(1)}$ are linearly independent.
In fact, if $\mu_1a^{(1)}_1+\cdots+\mu_\ell a^{(1)}_\ell=0$ for $\mu_{k} \in
\R$, then  
$
\mu_1\lambda_1a_1+(\mu_1\lambda_2+\mu_2)a_2+\cdots+
(\mu_1\lambda_\ell+\mu_\ell)a_\ell=0$
holds, and since $a_1,\dots,a_\ell$ are linearly independent and
$\lambda_1\neq 0$ we have $\mu_1=\cdots=\mu_\ell=0$.

\smallskip

Next we show that $\#\big( P(a_1^{(1)},\dots,a_\ell^{(1)})
\cap \Z^\ell\big) <\# \big(P(a_1,\dots,a_\ell)\cap \Z^\ell\big)$ by
constructing an injective map 
$G\colon  P(a_1^{(1)},\dots,a_\ell^{(1)})\cap \Z^\ell \to 
P(a_1,\dots,a_\ell)\cap \Z^\ell$  which does not take the value $x$.
If $y\in P(a_1^{(1)},\dots,a_\ell^{(1)})\cap \Z^\ell$, then $y=\sum_{{k}=1}^\ell y_\ell a^{(1)}_{k}=y_1\lambda_1a_1+
\sum_{{k}=2}^\ell (y_1\lambda_{k} +y_\ell)a_{k}$ with $y_{k}\in [0,1[$.  
Since $a_{k}\in \Z^\ell$ the map $G$ is well defined by
$$
G(y)=y_1\lambda_1a_1+\sum_{{k}=2}^\ell 
\big(y_1\lambda_{k} +y_{k}-\lfloor y_1\lambda_{k} +y_{k}\rfloor\big)a_{k}.
$$
If $G(y)=G(z)$, then
$y_1\lambda_1=z_1\lambda_1$ and 
$y_1\lambda_{k} +y_{k}-\lfloor y_1\lambda_{k} +y_{k}\rfloor=
z_1\lambda_{k} +z_{k}-\lfloor z_1\lambda_{k} +z_{k}\rfloor$
for ${k}=2,\dots,\ell$.
Since $\lambda_1\neq 0$ we have $y_1=z_1$ and consequently
$$
y_{k}-\lfloor y_1\lambda_{k} +y_{k}\rfloor=
z_{k}-\lfloor z_1\lambda_{k} +z_{k}\rfloor, 
\quad {k}=2,\dots,\ell.
$$
The two integer parts in this equation are $0$ or $1$. They
must be the same number,  for otherwise we would have a strictly
negative value on one side and a positive value on the other side.
Hence $y_{k}=z_{k}$ for ${k}=2,\dots,\ell$ and  $G$ is injective.  
The point $x=a^{(1)}_1\in
P(a_1,\dots,a_\ell)\cap \Z^\ell$ is not in the image of $G$, 
for $x=G(y)$ would imply that $y_1\lambda_1=\lambda_1$ for
some $y_1\in [0,1[$.
Hence  
$$\#\big(P(a_1^{(1)},\dots,a_\ell^{(1)})
\cap \Z^\ell\big) <\#\big(P(a_1,\dots,a_\ell)\cap \Z^\ell\big).$$
Observe that $a^{(1)}_1,\dots,a^{(1)}_\ell$ are in 
$\Gamma^\circ$ which is the closed convex cone spanned by 
$a_1,\dots,a_\ell$. By repeating this argument with $a^{(1)}_1,\dots,a^{(1)}_\ell$
in the role of $a_1,\dots,a_\ell$ finitely many times we conclude
that there exist linearly independent vectors $\xi_1,\dots,\xi_\ell$ in
$\Z^\ell\cap \Gamma^\circ$ such that  $P(\xi_1,\dots,\xi_\ell)\cap \Z^\ell=\{0\}$. 

\smallskip

We define the linear map $B\colon \R^\ell\to \R^\ell$ by
$B(x)=(\scalar{\xi_1}x,\dots,\scalar{\xi_\ell}x)$ and observe
that $B^*(x)=x_1\xi_1+\cdots+x_\ell\xi_\ell$.  This gives that
$B^*(\Z^\ell)\subseteq \Z^\ell$ and since $P(\xi_1,\dots,\xi_\ell)\cap
\Z^\ell=\{0\}$ it follows that $B^*(\Z^\ell)=\Z^\ell$.  We have
$$
Bx \in \Z^\ell \ \Leftrightarrow \  
\scalar {Bx}y\in \Z^\ell \ \forall  y\in \Z^\ell
\ \Leftrightarrow \ 
\scalar {x}{B^*y}\in \Z^\ell \ \forall  y\in \Z^\ell
\  \Leftrightarrow \  
x\in \Z^\ell.
$$
Finally, we define the linear map $L=M\circ B^{-1}$.
Since $M$ and $B$ are injective and $W$ is the image of $M$ 
we have  $L(\R^\ell)=W$, 
$$L(\Z^\ell)=M(B^{-1}(\Z^\ell))=M(\Z^\ell)=W\cap \Z^n.
$$
Any $y\in W\cap \R^n_+$ has $x=M^{-1}(y)\in \Gamma$ and since
$\xi_{k}\in \Gamma^\circ$ we have $\scalar x{\xi_{k}}\geq 0$
for ${k}=1,\dots,\ell$, which shows that  $B(x)\in \R^\ell_+$.
Thus $L^{-1}(\R^n_+)=B(M^{-1}(\R^n_+))\subseteq \R^\ell_+$. 
\end{proof}

\begin{lemma}
  \label{lem:3.2} If $L\colon \R^\ell\to \R^n$ is an injective linear
  map such that 
 $L(\Z^\ell)\subseteq \Z^n$ and $L^{-1}(\Z^n)=\Z^\ell$, then 
$L^*(\Z^n)=\Z^\ell$, where $L^*$ denotes the adjoint of $L$.  
\end{lemma}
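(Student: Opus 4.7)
The plan is to establish $L^*(\Z^n)\subseteq \Z^\ell$ directly from $L(\Z^\ell)\subseteq \Z^n$, and then to use the condition $L^{-1}(\Z^n)=\Z^\ell$ to show that $L(\Z^\ell)$ is a saturated (primitive) sublattice of $\Z^n$. Saturation forces $L(\Z^\ell)$ to be a direct summand of $\Z^n$, and the reverse inclusion $\Z^\ell\subseteq L^*(\Z^n)$ will then drop out by extending $\Z$-linear functionals from the summand to all of $\Z^n$.

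The easy inclusion is immediate: the ${k}$-th coordinate of $L^*(y)$ is $\scalar{y}{L(e_{k})}\in \Z$, since both $y$ and $L(e_{k})$ lie in $\Z^n$. For saturation, suppose $kv\in L(\Z^\ell)$ with $k\in \N^*$ and $v\in \Z^n$, and write $kv=L(x)$ for some $x\in \Z^\ell$. Then, viewing things inside $\R^\ell$ and $\R^n$, $v=L(x/k)\in \Z^n$, so the hypothesis $L^{-1}(\Z^n)=\Z^\ell$ forces $x/k\in \Z^\ell$, whence $v\in L(\Z^\ell)$. Thus the finitely generated abelian group $\Z^n/L(\Z^\ell)$ is torsion-free, so the structure theorem for finitely generated abelian groups furnishes a splitting $\Z^n=L(\Z^\ell)\oplus M$ with $M$ free abelian of rank $n-\ell$.

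Given an arbitrary $\alpha\in \Z^\ell$, I would then define a $\Z$-linear map $\phi\colon \Z^n\to \Z$ by $\phi(L(x)+u)=\scalar{\alpha}{x}$ for $x\in \Z^\ell$ and $u\in M$; this is well defined because the sum is direct and $L$ is injective. Every $\Z$-linear map $\phi\colon\Z^n\to \Z$ has the form $\phi(w)=\scalar{w}{y}$ for a unique $y\in \Z^n$, so for this choice of $y$ one gets $\scalar{L^*(y)}{x}=\scalar{y}{L(x)}=\phi(L(x))=\scalar{\alpha}{x}$ for every $x\in \Z^\ell$, hence $L^*(y)=\alpha$. The only non-routine step is the saturation-implies-splitting fact, which can equally well be obtained from the Smith normal form of the integer matrix representing $L$: the condition $L^{-1}(\Z^n)=\Z^\ell$ forces all invariant factors of $L$ to equal $1$, after which the surjectivity of $L^*$ can be read off directly from the normal form.
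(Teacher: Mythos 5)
Your proof is correct, and it is a genuinely different route from the paper's. The paper goes straight to the Smith normal form $D=SAT$ of the integer matrix $A$ for $L$, observes that $L^{-1}(\Z^n)=\Z^\ell$ forces the diagonal entries of $D$ to be $\pm 1$, and then reads off $L^*(\Z^n)=\Z^\ell$ from the transposed normal form $D^*=T^*A^*S^*$. You instead recast the hypothesis $L^{-1}(\Z^n)=\Z^\ell$ as saying that $L(\Z^\ell)$ is a saturated sublattice of $\Z^n$ (the quotient $\Z^n/L(\Z^\ell)$ is torsion-free, hence free), obtain a splitting $\Z^n=L(\Z^\ell)\oplus M$, and then produce a preimage of any $\alpha\in\Z^\ell$ under $L^*$ by extending the functional $L(x)\mapsto\scalar{\alpha}{x}$ from the summand $L(\Z^\ell)$ to all of $\Z^n$ and identifying it with $\scalar{\cdot}{y}$ for a unique $y\in\Z^n$. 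All the steps check out: the easy inclusion $L^*(\Z^n)\subseteq\Z^\ell$ is immediate from $L(\Z^\ell)\subseteq\Z^n$; the saturation argument correctly uses the injectivity of $L$ and the hypothesis $L^{-1}(\Z^n)=\Z^\ell$; well-definedness of $\phi$ rests on the directness of the sum together with injectivity of $L$; and the identification $\scalar{L^*(y)}{x}=\scalar{y}{L(x)}$ closes the loop. What your approach buys is a cleaner conceptual picture — the hypothesis is exactly primitivity of the sublattice, and surjectivity of $L^*$ is then just the familiar statement that $\Z$-linear functionals extend off a direct summand — at the cost of invoking the structure theorem for finitely generated abelian groups. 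The paper's argument is more matrix-explicit and self-contained once one accepts the Smith normal form. You note the equivalence yourself in your closing remark, which is accurate: the two arguments are essentially dual packagings of the same fact about invariant factors.
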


\begin{proof}  Let $A$ be the matrix for $L$ with respect to the
  standard bases on $\R^\ell$ and $\R^n$ and 
let  $D=SAT$ be the Smith normal form
of $A$, see Hungerford \cite{Hun:1974}, Propostion 2.11, p.~339.   
This means that we factor $A$ into 
$A=S^{-1}DT^{-1}$,  with $S\in \Z^{n\times n}$ 
such that  $x\mapsto Sx$ is  invertible on $\Z^n$,  
$T\in \Z^{\ell\times \ell}$ such that $x\mapsto Tx$
is invertible on $\Z^\ell$ and $D=SAT$ is an $n\times \ell$
matrix with all entries zero exccept 
the diagonal entries $a_{j,j}$ which are  of the
form $a_{j,j}=\pm d_j/d_{j-1}\neq 0$ where 
$d_j$ is the greatest common divisor of all determinants 
of $j\times j$  minors of $L$.

Since $D$ only maps integer vectors into $\Z^n$ and
$D(e_j/a_{j,j})=e_j\in \Z^n$ it follows that $a_{j,j}=\pm 1$. Hence
$D(\Z^\ell)\subseteq \Z^n$ and $D^{-1}(\Z^ n)=\Z^\ell$. The matrix $D^*=T^*A^*S^*$ is the normal form of $A^*$ and since the
first $k$ columns of $D^*$ are $\pm e_j$ we 
see that $L^*(\Z^n)=\Z^\ell$.

\end{proof}

\section{Surjectivity of the pullback ${F}_L^*$}
\label{sec:04}

\noindent 
Let us continue the study of the map ${F}_L$ now assuming
that $L\colon \R^\ell \to \R^n$ satisfies the conditions in Lemma \ref{lem:3.1}
with $W=L(\R^\ell)=\Span_\R(S)$ and $T=L^{-1}(S)$.  Since $L^{-1}(\R^n_+)\subseteq \R^\ell_+$ and $L(\Z^\ell)\subseteq \Z^n$ 
we have $L(mT\cap \Z^\ell)=mS\cap \Z^n$ and it follows that
the pullback (\ref{eq:2.9}) is surjective. Recall that $\eta_k=L(e_k)$
for $k=1,\dots,\ell$. Our goal is then to
prove that the pullback in 
(\ref{eq:2.6}) is  surjective.

\smallskip
We observe that 
if $A$ is an $m\times n$ matrix with entries in $\Z$, $a_1,\dots,a_m$
are its line vectors, $A_1,\dots,A_n$ are its column vectors and 
$b\in \Z^n$ is a column vector, then for every 
$t\in \C^{*m}$ we have 
\begin{equation}
\label{eq:4.1}
(t^{A_1},\dots,t^{A_n})^b=t^{Ab}.  
\end{equation}
In order to show that  ${F}_L\colon \C^{*n} \to \C^{*\ell}$ is
surjective we take $w\in \C^{*\ell}$, write $w_j=e^{b_j}$ for some 
$b_j\in \C$,  and let $A$ be the $\ell\times n$ matrix with line vectors
$\eta_1,\dots,\eta_\ell$, i.e.~$A$ is the matrix for $L^*$ with respect
to the standard bases on $\R^n$ and $\R^\ell$. 
By Lemma \ref{lem:3.2}  $L^*$ is surjective,   
so there is a solution $c\in \C^n$ of the linear system $Ac=b$.  
If we set $z_j=e^{c_j}$, then
$$
{F}_L(z)=(z^{\eta_1},\dots,z^{\eta_\ell})
=(e^{\scalar{\eta_1}c},\dots,e^{\scalar{\eta_\ell}c}) 
=(e^{b_1},\dots,e^{b_\ell})=w.
$$
which shows that $F_L$ is surjective.

\medskip

For every $z\in \C^{*n}$ we have
\begin{equation}
  \label{eq:4.5}
  \dfrac{\partial {F}_{L,{k}}}{\partial z_j}(z)
=\dfrac{\partial}{\partial z_j}
\big(z_1^{\eta_{{k},1}}\cdots z_n^{\eta_{{k},n}}\big)
= \frac{\eta_{{k},j} {F}_{L,{k}}(z)}{z_j} {}_.
\end{equation}
Since $L$ is injective, the vectors
$\eta_1,\dots,\eta_\ell$ are linearly independent, so 
the matrix 
\begin{equation*}
  \left[
    \begin{matrix}
      \eta_{1,1} &\dots&\eta_{1,n}\\ 
\vdots & \ddots & \vdots\\
      \eta_{\ell,1} &\dots&\eta_{\ell,n}
    \end{matrix} \right]
\end{equation*}
has rank $\ell$.  We choose $1\leq j_1<\cdots<j_\ell\leq n$ such that
the columns number $j_1,\dots,j_\ell$ are linearly independent.  Then 
the determinant of the corresponding $\ell\times \ell$ 
submatrix of the Jacobi matrix of ${F}_L$ is
$$
  \left|    \begin{matrix}
      \eta_{1,j_1} {F}_{L,1}(z)/z_{j_1}&\dots&\eta_{1,j_\ell}{F}_{L,1}(z)/z_{j_\ell}\\ 
\vdots & \ddots & \vdots\\
      \eta_{\ell,j_1}{F}_{L,\ell}(z)/z_{j_1} &\dots&\eta_{\ell,j_\ell}{F}_{L,\ell}(z)/z_{j_\ell} 
    \end{matrix} \right|
=\dfrac{{F}_{L,1}(z)\cdots {F}_{L,\ell}(z)}
{z_{j_1}\cdots z_{j_\ell}}
  \left|
    \begin{matrix}
      \eta_{1,j_1} &\dots&\eta_{1,j_\ell}\\ 
\vdots & \ddots & \vdots\\
      \eta_{\ell,j_1} &\dots&\eta_{\ell,j_\ell}
    \end{matrix} \right| \neq 0.
$$

\medskip

Let $w_0=F_L(z_0)$.
By the implicit function theorem the fiber 
${F}_L^{-1}(w_0)$ through  $z_0\in \C^{*n}$
is a complex manifold of dimension $n-\ell$. We can parametrize the fibers of ${F}_L$.  In order to do
so we extend $\eta_1,\dots,\eta_\ell$ to a basis in $\R^n$ by choosing 
$\eta_{\ell+1},\dots,\eta_n$ as a  basis for
$W^\perp=\Ker\, L^*$.  Since $W$ is spanned by vectors in $\Q^n$ the same holds for
$W^\perp$, so 
it is always possible to choose $\eta_j$ from $\Z^ n$.
We let  $M\colon \R^{n-\ell}\to \R^n$ be the linear map 
with $M(e_{j})=\eta_j$, for $j=\ell+1,\dots,n$.
Here we identify $\R^{n-\ell}$ with the subspace $\{0\}\times \R^{n-\ell}$
in $\R^n$ and view the vectors $e_{\ell+1},\dots,e_n$ in the standard
basis in $\R^n$ as the standard basis for $\R^{n-\ell}$.
Furthermore, we write $t=(t',t'')\in \C^n$ with $t'\in \C^\ell$ and
$t''\in \C^{n-\ell}$.  We let  $N\colon \R^n \to \R^n$ be the linear
map defined by $N(t)=L(t')+M(t'')$. Then 
\begin{equation}
  \label{eq:4.2}
 N^*(x)=(\scalar{\eta_1}x,\cdots,\scalar{\eta_n}x) = (L^*(x),M^*(x)). 
\end{equation}
If $B$ is the matrix for $N^*$ with respect to the standard basis,
then $\eta_1,\dots,\eta_n$ are the line vectors of $B$, 
so if we let $B_1,\dots,B_n$ be the column vectors in $B$,
then for every $b\in \Z^n$ we have
$$
(z_1t^{B_1},\dots,z_nt^{B_n})^b=z^bt^{Bb}.
$$
If we apply this formula with $b=\eta_j^*$ for $j=1,\dots,\ell$, then
we see that 
$$
B\eta_j^*=(\scalar{\eta_1}{\eta_j},\dots,\scalar{\eta_\ell}{\eta_j},
0,\dots,0)
=(L^*(e_j),0)=(A_j^*,0)
$$
and consequently 
$$
{F}_L(z_1t^{B_1},\dots,z_nt^{B_n})
=(z^{\eta_1}t^{B\eta_1^*},\dots,
z^{\eta_\ell}t^{B\eta_\ell^*})
=(z^{\eta_1}t'^{A_1},\dots,z^{\eta_\ell}t'^{A_\ell}).
$$
This formula tells us that every point
of the form $(z_1t^{B_1},\dots,z_nt^{B_n})$
with 
$$
(t'^{A_1},\dots,t'^{A_\ell})=(1,\dots,1)
$$ 
is in the fiber of ${F}_L$ through  $z$. Our observations lead to the
following lemma.

\begin{lemma}
  \label{lem:4.1} 
Let $B\in \Z^{n\times n}$ be the matrix with line vectors  
$\eta_1,\dots,\eta_n$ and let
$B_1,\dots,B_n$ denote its column vectors.  Let $z\in \C^{*n}$ 
and define 
\begin{equation}
  \label{eq:4.3}
  \Xi_z\colon \C^{*n}\to \C^n, \qquad
\Xi_z(t)=(z_1t^{B_1},\dots,z_nt^{B_n}), \quad t\in \C^{*n}.
\end{equation}
Then 
\begin{equation}
  \label{eq:4.4}
\Upsilon_z\colon \C^{*(n-\ell)}\to \C^n, \qquad
\Upsilon_z(t) =\Xi_z(1,\dots,1,t''),
\quad t''\in \C^{*(n-\ell)},  
\end{equation}
is a parametrization of the fiber of ${F}_L$ through $z$.
\end{lemma}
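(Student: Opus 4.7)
The plan is to prove two things: that $\Upsilon_z$ takes values in the fiber $F_L^{-1}(F_L(z))$, and that the resulting map is surjective onto that fiber. Holomorphicity of $\Upsilon_z$ is automatic since it is a product of integer-exponent monomial maps on $\C^{*(n-\ell)}$. The containment is essentially a restatement of the computation immediately preceding the lemma: choosing $t'=(1,\dots,1)$ makes $t'^{A_j}=1$ for $j=1,\dots,\ell$, so
\[
F_L(\Upsilon_z(t''))=(z^{\eta_1}\cdot 1,\dots,z^{\eta_\ell}\cdot 1)=F_L(z)
\]
for every $t''\in \C^{*(n-\ell)}$.

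For surjectivity I would take $z'\in F_L^{-1}(F_L(z))$ and write $z'_j=z_je^{x_j}$ with $x=(x_1,\dots,x_n)\in\C^n$. The fiber condition $F_L(z')=F_L(z)$ translates to $L^*(x)\in 2\pi i\Z^\ell$, and this is where Lemma \ref{lem:3.2} does the essential work: it guarantees $L^*(\Z^n)=\Z^\ell$, so there exists $y\in\Z^n$ with $2\pi i L^*(y)=L^*(x)$. Then $x-2\pi iy\in \Ker L^*=W^\perp$, and since $\eta_{\ell+1},\dots,\eta_n$ form a basis of $W^\perp$ (making the $\C$-linear extension of $M$ a bijection onto $W^\perp$), there is a unique $c\in \C^{n-\ell}$ with $M(c)=x-2\pi iy$. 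Setting $t''_k=e^{c_k}$ for $k=\ell+1,\dots,n$ and using that $e^{2\pi iy_j}=1$ for every $j$, a short monomial computation identifies $z_je^{x_j}$ with the $j$th coordinate of $\Upsilon_z(t'')$, so $\Upsilon_z(t'')=z'$.

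I expect the surjectivity step to be the main obstacle. Unwinding the fiber condition through the complex exponential forces one to work modulo $2\pi i\Z^n$, and the correction that brings the logarithm of $z'/z$ into $\Ker L^*$ is available precisely because Lemma \ref{lem:3.2} upgrades $L^*(\Z^n)\subseteq\Z^\ell$ to equality — a weaker inclusion would leave this step stranded. Once both halves are in place the parametrization claim follows, with the Jacobian computation already carried out in the section confirming that $\Upsilon_z$ has maximal rank $n-\ell$ and so matches the complex dimension of the fiber.
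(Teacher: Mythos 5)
Your proof is correct and follows essentially the same route as the paper: write the fiber element as $z_je^{x_j}$, observe that the fiber condition forces $L^*(x)\in 2\pi i\Z^\ell$, invoke Lemma~\ref{lem:3.2} to subtract an element of $2\pi i\Z^n$ and land in $\Ker L^*$, and then express the result in the basis $\eta_{\ell+1},\dots,\eta_n$ to produce $t''$. Your $x$, $y$, $c$ correspond exactly to the paper's $s$, $\nu$, $u''$, and you have correctly identified that the upgrade from $L^*(\Z^n)\subseteq\Z^\ell$ to equality is the load-bearing step.
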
  

\begin{proof} 
From our calculations before the statement of the 
lemma it follows that we only need to prove that every $\zeta\in \C^{*n}$ 
which satisfies ${F}_L(\zeta)={F}_L(z)$ is of the
form $\zeta=(z_1t^{B_1},\dots,z_nt^{B_n})$ for some
$t=(1,\dots,1,t'')$ with $t''\in \C^{*(n-\ell)}$.

\smallskip

We choose $s=(s_1,\dots,s_n)\in \C^n$ such that
$\zeta_j=z_je^{s_j}$ for $j=1,\dots,n$ and observe that
$\zeta^{\eta_{k}}=z^{\eta_{k}}$ implies that
$e^{\scalar{\eta_{k}}s}=1$ for ${k}=1,\dots,\ell$.
Hence there exists $\mu=(\mu_1,\dots,\mu_\ell)\in \Z^\ell$ 
such that $\scalar{\eta_{k}}s=2\pi i \mu_{k}$ for ${k}=1,\dots,\ell$. 
By Lemma \ref{lem:3.2} there exists $\nu\in \Z^n$ such that
$L^*(\nu)=\mu$.  We have
$L^*(s)=(\scalar{\eta_1}s,\dots,\scalar{\eta_\ell}s)$, so
$$
s-2\pi i \nu\in \Ker_\C \, L^*=\Span_\C\, \{\eta_{\ell+1},\dots,\eta_n\},
$$
where $\Ker_\C\, L^*$ is the kernel of the natural extension of $L^*$ to a
$\C$-linear map $\C^n\to \C^\ell$.   We take $u''=(u_{\ell+1},\dots,u_n)\in
\C^{n-\ell}$ such that $s-2\pi i\nu=u_{\ell+1}\eta_{\ell+1}+\cdots+u_n\eta_n$,
set $u=(0,\dots,0,u'')\in \C^n$ and $t=(t_1,\dots,t_n)\in \C^n$ 
with $t_j=e^{u_j}$ for $j=1,\dots,n$.   Then
$$
z_jt^{B_j}=z_je^{u_{\ell+1}\eta_{\ell+1,j}+\cdots+u_n \eta_{n,j}}
=z_je^{s_j-2\pi i\nu_j}=\zeta_j.
$$ 
Hence  $\zeta=\Upsilon_z(t'')$ and $\Upsilon_z$ is
a parametrization of the fiber of ${F}_L$ through $z$.  
\end{proof}

\newpage

\noindent\textbf{End of proof of Theorem \ref{thm:1.2}:} Take $u\in \L^S(\C^n)$, fix $z\in \C^n$
and define $u_z\in \PSH(\C^{*(n-\ell)})$ by 
$$
u_z(t)=u(\Upsilon_z(t)), \qquad t\in \C^{*(n-\ell)}.
$$
Then we have the estimate 
$$
u_z(t) \leq c_u+H_S(\Upsilon_z(t))
=c_u+H_T({F}_L(\Upsilon_z(t)))=c_u+H_T({F}_L(z))=c_u+H_S(z).
$$
This gives us extension of $u_z$ to a plurisubharmonic function
on $\C^{n-\ell}$ by the formula 
$$
u_z(t)=\varlimsup_{\C^{*(n-\ell)}\ni \tau \to t} u_z(\tau), 
\qquad t\in \C^{n-\ell}.
$$
We have $u_z(t)\leq c_u+H_S(z)$ for every $t$, so by the 
Liouville theorem for plurisubharmonic functions  
$u_z$ is the constant function taking the value $u(z)$.

\medskip

Define $v\colon \C^{*\ell}\to
\R\cup \{-\infty\}$ by $v(w)=u(z)$ for any $z\in F_L^{-1}(w)\subset \C^{*n}$. This is well-defined since $F_L$ is surjective and $u$ is constant on $F_L^{-1}(w)$. Since $F_L$ is a surjective submersion of constant rank there exists for every $w_0\in \C^{*\ell}$ an open neighborhood $U_0$ of $w_0$ 
and a holomorphic map $s\colon U_0\to \C^n$ such that 
$s(w_0)=z_0$ and ${F}_L(s(w))=w$ for every $w\in U_0$.  Then $v(w)= u(s(w))$ on $U_0$, which shows that $v\in \mathcal{PSH}(\C^{*\ell})$. 

\medskip

We have the estimate 
$v(w)=u(z)\leq c_u+H_S(z)=c_u+H_T(w)$ for every $w\in \C^{*\ell}$
which implies that $v$ extends to a function in $\L^T(\C^\ell)$
satisfying $u={F}_L^*v$.    Hence the pullback 
(\ref{eq:2.6}) is surjective. \hfill $\square$

\bigskip

\begin{corollary}\label{cor:4.2}
  Let $S\subset \R^n_+$ be compact and convex with $0\in S$ and assume
  that $S$ has empty interior and that  $S\cap \Q^n$ is dense in
  $S$. Then $V^{S*}_{K,q}$ is maximal.
\end{corollary}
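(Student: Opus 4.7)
The plan is to use Theorem \ref{thm:1.2} to realize $V^{S*}_{K,q}$ on $\C^{*n}$ as the pullback under $F_L$ of a locally bounded plurisubharmonic function on the lower-dimensional space $\C^\ell$, and then exploit that such a function depending on only $\ell<n$ complex directions has vanishing $n$-fold Monge--Ampère. The conclusion on all of $\C^n$ then follows by extension across the pluripolar set $\C^n\setminus\C^{*n}$.

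Since $S$ has empty interior, its affine dimension $\ell$ is strictly less than $n$, so the hypotheses of Theorem \ref{thm:1.2} are in force. This yields the linear map $L\colon\R^\ell\to\R^n$, the compact convex body $T\subset\R^\ell_+$ with $0\in T$, and the holomorphic submersion $F_L\colon\C^{*n}\to\C^{*\ell}$ with $V^S_{K,q}=F_L^*V^T_{K',q'}$ on $\C^{*n}$, where $K'=F_L(K)$ and $q'=F_{L*}q$. Because $F_L$ is continuous and open, upper regularization commutes with $F_L^*$, so on $\C^{*n}$
\begin{equation*}
V^{S*}_{K,q}=F_L^*V^{T*}_{K',q'}.
\end{equation*}
The admissibility of $q$ gives the two-sided growth estimate $H_S-c\leq V^{S*}_{K,q}\leq H_S+c$ discussed in Section 1, and by Proposition \ref{prop:2.1} the push-forward $q'$ is admissible on $K'$, so the analogous estimate holds for $V^{T*}_{K',q'}$. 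Since $H_S$ and $H_T$ are nonnegative and locally bounded above, both $V^{S*}_{K,q}$ and $V^{T*}_{K',q'}$ are locally bounded, and their Bedford--Taylor Monge--Ampère measures are well-defined.

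The main step is the vanishing $(dd^cV^{S*}_{K,q})^n=0$ on $\C^{*n}$. Around any $z_0\in\C^{*n}$, since $F_L$ is a submersion of rank $\ell$ (by the Jacobian computation of Section 4), there exist local holomorphic coordinates $(w,\zeta)\in\C^\ell\times\C^{n-\ell}$ in which $F_L$ becomes the projection $(w,\zeta)\mapsto w$. In these coordinates $V^{S*}_{K,q}(w,\zeta)=V^{T*}_{K',q'}(w)$ depends only on $w$, so $dd^cV^{S*}_{K,q}$ involves only the $\ell$ differentials $dw_j\wedge d\bar w_k$, and its $n$-fold wedge product vanishes identically because $n>\ell$. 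Equivalently, by naturality of the Bedford--Taylor operator under pullback by submersions of locally bounded plurisubharmonic functions, $(dd^cV^{S*}_{K,q})^k=F_L^*(dd^cV^{T*}_{K',q'})^k$ for $k\leq\ell$ and vanishes for $k>\ell$ since there are no nonzero forms of bidegree $(k,k)$ on the $\ell$-dimensional manifold $\C^{*\ell}$.

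To conclude, $\C^n\setminus\C^{*n}$ is a finite union of coordinate hyperplanes and hence pluripolar. The Monge--Ampère measure of a locally bounded plurisubharmonic function puts no mass on pluripolar sets, so the vanishing on $\C^{*n}$ propagates to $(dd^cV^{S*}_{K,q})^n=0$ on all of $\C^n$, i.e.\ $V^{S*}_{K,q}$ is maximal. The main technical points to handle carefully are the commutation of upper regularization with $F_L^*$, which rests on openness and continuity of $F_L$, and the naturality of Bedford--Taylor along the submersion $F_L$; both are handled cleanly by working in the local submersion coordinates above.
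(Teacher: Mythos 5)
Your proposal is correct and follows essentially the same path as the paper: reduce via Theorem~\ref{thm:1.2} to the submersion $F_L$ onto $\C^{*\ell}$, observe that $V^{S*}_{K,q}$ is constant along the fibers (the paper uses the explicit fiber parametrization $\Upsilon_z$ from Lemma~\ref{lem:4.1}, you use local submersion coordinates, which is equivalent), conclude that the $n$-fold Monge--Amp\`ere vanishes on $\C^{*n}$ since $\ell<n$, and extend across the pluripolar set $\C^n\setminus\C^{*n}$ using local boundedness. You do spell out one point the paper leaves implicit, namely that upper regularization commutes with $F_L^*$ because $F_L$ is open and continuous; that justification is correct and worth having.
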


\begin{proof}
By Theorem \ref{thm:1.2} we have $V^{S*}_{K,q}=V^{T*}_{K',q'}\circ
{F}_L$ on $\C^{*n}$. From Lemma \ref{lem:4.1} follows that for every $z\in
\C^{*n}$ we have $V^{S*}_{K,q}\circ
\Upsilon_z=V^{T*}_{K',q'}\circ{F}_L\circ \Upsilon_z$. Since
${F}_L\circ \Upsilon_z$ is constant it follows that $V^{S*}_{K,q}$ is maximal on
$\C^{*n}$. Since $V^{S*}_{K,q}$ is locally bounded and
$\C^n\setminus\C^{*n}$ is pluripolar, $V^{S*}_{K,q}$ is maximal on
$\C^n$. 
\end{proof}

\section{Solutions of Cauchy-Riemann equations}  
\label{sec:05}

\noindent
To construct functions in ${\mathcal P}^S_m(\C^n)$ 
we are going to apply a special case  of Hör\-mander's theorem 
on the existence of solutions of the Cauchy-Riemann equation
with $L^2$ estimates.  Before we can state it  we need to introduce
some notation.  Let $\varphi\colon X\to \overline \R$ be a measurable
function on an open subset $X$ 
of $\C^n$  and let $L^2(X,\varphi)$ denote the set of all 
functions $u\in \Ltwoloc(X)$  such that
\begin{equation}
  \label{eq:5.1}
    \int_X |u|^2e^{-\varphi}\, d\lambda <+\infty,
\end{equation}
where $\lambda$ denotes the Lebesgue measure.
Then $L^2(X,\varphi)$  is a Hilbert space with inner product
\begin{equation}
  \label{eq:5.2}
  \scalar uv_\varphi =\int_X u\bar v\, e^{-\varphi} d\lambda, 
\qquad u,v\in L^2(X,\varphi),
\end{equation}
and corresponding norm $\|u\|_\varphi =\scalar uu_\varphi^{1/2}$ for $u\in L^2(X,\varphi)$.

\medskip

Similarly,  we let  $L^2_{(0,1)}(X,\varphi)$ denote the set of all
$(0,1)$-forms $f=f_1\, d\bar z_1+\cdots+f_n\, d\bar z_n$,
with coefficients  $f_j\in L^2(X,\varphi)$.
Then $L^2_{(0,1)}(X,\varphi)$  is a Hilbert space with inner product
\begin{equation}
  \label{eq:5.4}
  \scalar fg_\varphi =\int_X 
\scalar fg  e^{-\varphi} d\lambda, \quad  
\scalar fg = f_1\bar g_1+\cdots+\,f_n\bar g_n,
\quad f,g\in L^2_{(0,1)}(X,\varphi),
\end{equation}
and corresponding norm $\|f\|_\varphi =\scalar ff_\varphi^{1/2}$ for $f\in L^2_{(0,1)}(X,\varphi)$.

\medskip

The function $u\in \Ltwoloc(X)$ is 
a solution of the Cauchy-Riemann equation  
$\bar\partial u=f$, i.e.~$\bar{\partial}_ju=f_j$ for $j=1,\dots,n$,  
in the sense of distributions if
\begin{equation}
  \label{eq:5.6}
-\int_X u \bar \partial_j v \, d\lambda
=\int_X f_jv\, d\lambda, \qquad v\in C_0^\infty(X).
\end{equation}
A necessary condition for existence of solution is that
$$
\bar \partial f=\sum_{k=1}^n \bar \partial f_k\wedge d\bar z_k
=\sum_{j<k}\big(\bar\partial_k f_j
-\bar \partial_j f_k\big) d\bar z_j\wedge d\bar z_k=0.  
$$

\medskip
\begin{theorem} {\bf(H\"ormander)} \label{thm:5.1} 
  Let $X$ be a pseudoconvex domain  of  
$\C^n$, $\varphi\in \PSH(X)$ and define for $a\in \R$,  
  \begin{equation}
    \label{eq:5.7}
    \varphi_a(z)=\varphi(z)+a\log(1+|z|^2), \qquad z\in X.
  \end{equation}
Then for every $a>0$ and $f\in L^2_{(0,1)}(X,\varphi_{a-2})$ 
satisfying $\bar\partial f=0$ 
there exists a solution 
$u\in L^2(X,\varphi_a)$ of 
$\bar\partial u=f$ satisfying the estimate
\begin{align}  
\label{eq:5.8}
\|u\|_{\varphi_a}^2&=\int_X |u|^2(1+|z|^2)^{-a} e^{-\varphi}\,
                     d\lambda \\
&\leq 
\dfrac 1a\int_X |f|^2(1+|z|^2)^{-a+2} e^{-\varphi}\, d\lambda
=\dfrac 1a \|f\|_{\varphi_{a-2}}^2.
\nonumber
\end{align}
If $f_j\in \mathcal{C}^\infty(X)$ for $j=1,\dots,n$, then $u\in \mathcal{C}^\infty(X)$. 
\end{theorem}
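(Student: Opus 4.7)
This result is exactly Hörmander's classical $L^2$-existence theorem for $\bar\partial$, and the particular factor $1/a$ will emerge from a sharp pointwise lower bound on the Levi form of the auxiliary weight $a\log(1+|z|^2)$. The plan is to work throughout with the single weight $\psi := \varphi_a$ and apply Hörmander's Hilbert space duality argument.

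The first step is the pointwise Levi-form estimate. A direct calculation gives
$$\frac{\partial^2 \log(1+|z|^2)}{\partial z_k\,\partial\bar z_j} = \frac{\delta_{jk}}{1+|z|^2} - \frac{z_j\bar z_k}{(1+|z|^2)^2},$$
and Cauchy-Schwarz applied to $z$ and any $(0,1)$-vector $\xi$ yields $i\partial\bar\partial\log(1+|z|^2)(\xi,\bar\xi)\geq |\xi|^2/(1+|z|^2)^2$. Combined with plurisubharmonicity of $\varphi$, this produces the bound
$$\sum_{j,k}\frac{\partial^2\psi}{\partial z_k\,\partial\bar z_j}\xi_j\bar\xi_k \geq \frac{a|\xi|^2}{(1+|z|^2)^2} =: c(z)|\xi|^2$$
on $(0,1)$-vectors, with $c$ continuous and strictly positive.

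Next I view $\bar\partial$ as two closed, densely defined operators $T\colon L^2(X,\psi)\to L^2_{(0,1)}(X,\psi)$ and $S\colon L^2_{(0,1)}(X,\psi)\to L^2_{(0,2)}(X,\psi)$, with $ST=0$. The Bochner-Kodaira-Morrey-Kohn identity, applied on a smooth plurisubharmonic exhaustion of $X$ (available by pseudoconvexity) to handle cutoffs, yields the a priori inequality
$$\|T^*g\|_\psi^2 + \|Sg\|_\psi^2 \geq \int_X c(z)|g|^2 e^{-\psi}\,d\lambda \qquad \text{for all }g\in\mathrm{Dom}(T^*)\cap\mathrm{Dom}(S).$$
Given $f\in L^2_{(0,1)}(X,\varphi_{a-2})\subseteq L^2_{(0,1)}(X,\psi)$ with $\bar\partial f=0$, I decompose an arbitrary $g\in\mathrm{Dom}(T^*)$ orthogonally as $g=g_1+g_2$ with $g_1\in\ker S$ and $g_2\in(\ker S)^\perp$. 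Since $ST=0$ gives $\overline{\mathrm{Range}(T)}\subseteq\ker S$, we have $g_2\in\ker T^*$, so $T^*g=T^*g_1$; and $\langle f,g\rangle_\psi=\langle f,g_1\rangle_\psi$ because $f\in\ker S$ is orthogonal to $g_2$. Splitting $c^{-1/2}\cdot c^{1/2}$ in Cauchy-Schwarz and using the a priori estimate on $g_1$ yields
$$|\langle f,g\rangle_\psi|^2 \leq \left(\int_X |f|^2 c^{-1}e^{-\psi}\,d\lambda\right)\|T^*g\|_\psi^2 = \frac{1}{a}\|f\|_{\varphi_{a-2}}^2\,\|T^*g\|_\psi^2,$$
the final equality using $c^{-1}e^{-\psi}=a^{-1}(1+|z|^2)^2 e^{-\psi}=a^{-1}e^{-\varphi_{a-2}}$. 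Hahn-Banach then produces $u\in L^2(X,\psi)$ with $\bar\partial u=f$ and the stated norm bound. Smoothness of $u$ when $f$ is smooth follows by hypoellipticity: any local smooth solution $u_0$ of $\bar\partial u_0=f$ (given by the Dolbeault lemma since $\bar\partial f=0$) differs from $u$ by a holomorphic, hence smooth, function.

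The main technical obstacle is extending the Bochner-Kodaira-Morrey-Kohn inequality from smooth compactly supported test forms to the full domain intersection $\mathrm{Dom}(T^*)\cap\mathrm{Dom}(S)$, and, since $\varphi$ is only assumed plurisubharmonic (not smooth), first approximating $\varphi$ from above by smooth strictly plurisubharmonic weights, solving with uniform bounds, and passing to a weak limit. Both steps rely crucially on the pseudoconvexity of $X$ through a plurisubharmonic exhaustion; everything else is routine Hilbert space duality.
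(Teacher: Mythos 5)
Your $L^2$-estimate argument is correct and is exactly the proof the paper defers to (Hörmander's book, Theorem~4.2.6): the Levi-form bound $i\partial\bar\partial\bigl(a\log(1+|z|^2)\bigr)(\xi,\bar\xi)\geq a|\xi|^2/(1+|z|^2)^2$ is what produces the factor $1/a$ and, via $c^{-1}e^{-\varphi_a}=a^{-1}e^{-\varphi_{a-2}}$, the weight shift from $a$ to $a-2$, and the orthogonal decomposition of $\mathrm{Dom}(T^*)$ along $\ker S$ followed by the weighted Cauchy--Schwarz step is the standard Hilbert-space mechanics of that proof; the two technical obstacles you flag (density of smooth forms in the graph norm, and smoothing a merely psh $\varphi$ from above) are indeed the ones handled in the cited reference. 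Where you genuinely differ from the paper is the final smoothness claim: the paper computes $\Delta u=4\sum_j\partial_j f_j\in \mathcal{C}^\infty(X)$ and invokes hypoellipticity of the Laplacian directly, while you locally subtract a smooth Dolbeault solution $u_0$ of $\bar\partial u_0=f$ and observe that $u-u_0$ is weakly, hence genuinely, holomorphic. Both are valid; the paper's version avoids the Dolbeault--Grothendieck lemma and is marginally more self-contained. One small terminological slip worth fixing: you announce ``by hypoellipticity'' but then give the Dolbeault argument --- the regularity principle you actually use is that distribution solutions of $\bar\partial v=0$ are holomorphic (Weyl's lemma for $\bar\partial$), applied to the correction $u-u_0$, not hypoellipticity of $\Delta$ applied directly to $u$.
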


For a proof see Hörmander \cite{Hormander:convexity}, Theorem 4.2.6.
The smoothness statement follows from the fact that the Laplace
operator  $\Delta=4\sum_{j=1}^n \partial_j\bar \partial_j$
is is hypoelliptic, i.e.~every distribution 
solution $v\in {\mathcal D}'(X)$ of $\Delta v=g$ with
$g\in \mathcal{C}^\infty(X)$ is  a $\mathcal{C}^\infty$ function on $X$.
If $u$ satisfies  $\bar \partial u=f$
in the sense of distributions, then
\begin{equation}
  \label{eq:5.9}
  \Delta u
=4\sum_{j=1}^n\partial_j\bar\partial_j  u
=4\sum_{j=1}^n\partial_j f_j\in \mathcal{C}^\infty(X),
\end{equation}
so $u\in \mathcal{C}^\infty(X)$. 
Observe  that if $e^{-\varphi}$ is not integrable in any
neighborhood of $z\in X$, then  a solution $u\in L^2(X,\varphi_a)$
satisfying (\ref{eq:5.8}) vanishes at $z$.  

\smallskip
Let us now review a simple method 
for estimating  values $u(z)$ of a solution $u$ of $\bar \partial u=f$ 
satisfying (\ref{eq:5.8}) for some measurable $\varphi\colon X\to
\overline \R$.   
If   $\bar B(z,\delta) \subseteq X\setminus \supp f$,
then $u$ is holomorphic in a neighborhood of $\bar B(z,\delta)$ 
and the mean value theorem gives  
\begin{equation}
  \label{eq:5.10}
u(z)={\mathcal M}_\delta u(z)=
\dfrac 1{\Omega_{2n}\delta^{2n}}
\int_{B(z,\delta)} u\, d\lambda. 
\end{equation}
By the Cauchy-Schwarz inequality where $\Omega_{2n}$ denotes the volume of the unit ball
\begin{align}
  \label{eq:5.11}
  |u(z)|&\leq \dfrac 1{\Omega_{2n}\delta^{2n}} 
\int_{B(z,\delta)}|u|e^{-\varphi_a/2}\cdot e^{\varphi_a/2}\,
          d\lambda 
\leq \dfrac {\|u\|_{\varphi_a}}{\Omega_{2n}\delta^{2n}} 
\bigg(\int_{B(z,\delta)}e^{\varphi_a}\, d\lambda \bigg)^{1/2}
\\
&
\leq  a^{-1/2} \Omega_{2n}^{-1/2}\delta^{-n} \|f\|_{\varphi_{a-2}} \cdot
\sup_{|w|\leq \delta} e^{\varphi_a(z+w)/2}.
\nonumber
\end{align}
In our proof of the Siciak-Zakharyuta theorem we need more
sophisticated  methods for deriving uniform estimates from 
$L^2$ estimates.  (See
Hörmander \cite{Hormander:LPDO}, Chapter 15, \cite{Hormander:2018},
p.~328 and  Sigurdsson \cite{Sig:1986}, Lemma~1.3.5.)
These methods are  based on a
formula for  the fundamental solution $E\in \Loneloc(\C^n)\cap
C^{\infty}(\C^n\setminus \{0\})$ of the Laplace operator, 
which for 
$n\geq 2$  is given by
\begin{equation}
  \label{eq:5.12}
  E(z)=-\tfrac 12|z|^{-2n+2}/(\omega_{2n}(n-1))
= -\tfrac 12|z_1\bar z_1+\cdots+z_n\bar z_n|^{-n+1}/(\omega_{2n}(n-1))
\end{equation}
where $\omega_{2n}$ denotes the volume of the unit sphere, and has partial derivatives given by 
\begin{equation}
  \label{eq:5.13}
  \partial_jE(z)=
\tfrac 12|z_1\bar z_1+\cdots+z_n\bar z_n|^{-n}\bar z_j/\omega_{2n}
=\tfrac 12 |z|^{-2n}\bar z_j/\omega_{2n}.
\end{equation}
Recall that for every distribution $v\in {\mathcal E}'(\C^n)$ with
compact support we have
\begin{equation}
  \label{eq:5.14}
  v=E*(\Delta v)=4\sum_{j=1}^n (\partial_j E)*(\bar \partial_j v).
\end{equation}
Assume now that  $u\in C^1(X)$,  $\varphi\colon X\to \overline
\R$  is a measurable function on $X$, $\bar \partial u=f$,
and that $\|u\|_{\varphi_a}\leq a^{-1/2}\|f\|_{\varphi_{a-2}}$ for 
some $a>0$.  Let
     $\chi\in \mathcal{C}^\infty_0(\C^n)$ with $\chi=1$ in some
neighborhood of $0$ and $\chi_z(w)=\chi(z-w)$. By \eqref{eq:5.13} we have for every
$z\in X$ such that $z-K\subset X$ with  $K=\supp \chi$, 
\begin{multline}
  \label{eq:5.15}
u(z)=\chi_z(z)u(z)=
4\sum_{j=1}^n \Big(
\big(\partial_j E\big)*\big(\chi_z\bar\partial_j u\big)(z)
+
\big(\partial_j E\big)*\big(u \bar\partial_j \chi_z\big)(z)
\Big) 
 \\
=
\dfrac 2{\omega_{2n}} \int_{K}
\Big(
|w|^{-2n}\chi(w)\sum_{j=1}^n \bar w_j \bar\partial_j u(z-w)
-|w|^{-2n}\sum_{j=1}^n \bar w_j \bar\partial_j\chi(w)  u(z-w)
\Big)\, d\lambda(w).
\end{multline}
We choose $\chi(w)=\chi_0(|w|^2/\delta^2)$ for $\delta>0$, where
$\chi_0\in \mathcal{C}^\infty(\R)$, $\chi_0(x)=1$ for $x\leq \tfrac 14$, 
$\chi_0$ is decreasing and $\chi_0=0$ for $x\geq 1$.  
Then $\chi(w)=1$ for $|w|\leq \tfrac 12 \delta$, $0\leq \chi\leq 1$, 
and $\chi(w)=0$ for $|w|\geq \delta$.  We have 
$\bar \partial_j \chi(w)=\chi_0'(|w|^2/\delta^2)w_j/\delta^2$ and that
$\chi_0$ can be chosen such that $|\chi_0'|\leq 2$.  We
use this information in the second sum in the integrand in \eqref{eq:5.15} and 
apply the Cauchy-Schwarz inequality to the first sum to get the estimate
\begin{align}
\label{eq:5.16}
|u(z)|&\leq \dfrac 2{\omega_{2n}} \int\limits_{|w|\leq \delta}
|w|^{-2n+1}\, d\lambda(w) \cdot \sup_{|w|\leq \delta}|f(z+w)|
\\
&+  
\dfrac 4{\omega_{2n}\delta^2} \int\limits_{\frac 12 \delta\leq |w|\leq \delta}
|w|^{-2n+2}|u(z-w)| \, d\lambda(w).
\nonumber
\end{align}
Now we apply the Cauchy-Schwarz inequality as in (\ref{eq:5.11})
\begin{align}
\label{eq:5.17}
  |u(z)|&\leq 2\delta \cdot \sup_{|w|\leq \delta}|f(z+w)|
+
\dfrac 4{\omega_{2n}\delta^2} \bigg(\int\limits_{\frac 12 \delta\leq |w|\leq \delta}
|w|^{-4n+4}e^{\varphi_a(z-w)} \,
          d\lambda(w)\bigg)^{1/2}\|u\|_{\varphi_a}
\\
&\leq 2\delta \cdot \sup_{|w|\leq \delta}|f(z+w)|
+a^{-1/2}c_n\delta^{-n} \|f\|_{\varphi_{a-2}} \cdot 
\sup_{|w|\leq \delta} e^{\varphi_a(z+w)/2},
\nonumber
\end{align}
and
\begin{equation}
  \label{eq:5.18}
  c_n=
\dfrac 4{\omega_{2n}} \bigg(\int\limits_{\frac 12 \leq |w|\leq 1}
|w|^{-4n+4} \,  d\lambda(w)\bigg)^{1/2}
=  \begin{cases}
4\big(\log 2/\omega_{2n}\big)^{1/2}, &n=2,\\
4 \big((2^{2n-4}-1)/(\omega_{2n}(2n-4))\big)^{1/2}, &n>2. 
  \end{cases}
\end{equation}
We summarize our calculations in the next result.

\begin{theorem}\label{thm:5.2}  
Let $u\in \mathcal C^1(X)$ denote a solution of the Cauchy-Riemann equations 
given in Theorem \ref{thm:5.1}.  Then for every   $z\in X$  and 
$0<\delta <d(z,\partial X)$ we have 
  \begin{equation}
    \label{eq:5.19}
    |u(z)|
\leq 2\delta \cdot \sup_{|w|\leq \delta}|f(z+w)|
+a^{-1/2}c_n\delta^{-n} \|f\|_{\varphi_{a-2}} 
\sup_{|w|\leq \delta} e^{\varphi_a(z+w)/2},
  \end{equation}
where the constant $c_n$ is given by (\ref{eq:5.18}).
\end{theorem}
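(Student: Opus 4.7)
The plan is to derive the estimate by representing $u(z)$ via convolution with the partial derivatives of the fundamental solution $E$ of the Laplacian on $\C^n$, after localizing $u$ by a cutoff of scale $\delta$. Take $\chi\in\mathcal C_0^\infty(\C^n)$ of the form $\chi(w)=\chi_0(|w|^2/\delta^2)$ already introduced in the excerpt, so $\chi\equiv 1$ on $\{|w|\le\delta/2\}$, $\chi\equiv 0$ outside $\{|w|\le\delta\}$, and $|\bar\partial_j\chi(w)|\le 2|w_j|/\delta^2$. With $\chi_z(w)=\chi(z-w)$ the hypothesis $\delta<d(z,\partial X)$ guarantees that $\chi_z u$ has compact support in $X$. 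Applying the representation formula \eqref{eq:5.14} to $v=\chi_z u$, using the Leibniz rule for $\bar\partial_j(\chi_z u)$ and the identity $\bar\partial u=f$, and noting that $\chi_z(z)u(z)=u(z)$, yields precisely the identity \eqref{eq:5.15}.

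The right-hand side of \eqref{eq:5.15} splits into two integrals. For the first, which contains $\chi(w)\sum_j\bar w_j f_j(z-w)$, estimate $|\sum_j\bar w_j f_j(z-w)|\le|w|\sup_{|w|\le\delta}|f(z+w)|$ and integrate $|w|^{-2n+1}$ over the ball of radius $\delta$ in polar coordinates to obtain $\omega_{2n}\delta$; multiplying by the prefactor $2/\omega_{2n}$ produces the first term $2\delta\sup_{|w|\le\delta}|f(z+w)|$ of \eqref{eq:5.19}. For the second integral the support of $\bar\partial\chi$ confines integration to the annulus $\delta/2\le|w|\le\delta$, and the integrand is dominated by a multiple of $\delta^{-2}|w|^{-2n+2}|u(z-w)|$.

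The main technical step is to bound this second integral. Apply the Cauchy-Schwarz inequality against the Hörmander weight $e^{-\varphi_a}$:
\begin{equation*}
\int_{\delta/2\le|w|\le\delta}|w|^{-2n+2}|u(z-w)|\,d\lambda(w)\le\|u\|_{\varphi_a}\bigg(\int_{\delta/2\le|w|\le\delta}|w|^{-4n+4}e^{\varphi_a(z-w)}\,d\lambda(w)\bigg)^{1/2}.
\end{equation*}
Pull out $\sup_{|w|\le\delta}e^{\varphi_a(z+w)/2}$ from the square root and rescale $w=\delta w'$. The Jacobian $\delta^{2n}$ combined with the factor $\delta^{-4n+4}$ yields $\delta^{-2n+4}$ under the integral, hence $\delta^{-n+2}$ after taking the square root; together with the prefactor $4/(\omega_{2n}\delta^2)$ this produces exactly $\delta^{-n}$ times the dimensionless constant $c_n$ defined in \eqref{eq:5.18}. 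Finally, the Hörmander $L^2$ bound $\|u\|_{\varphi_a}\le a^{-1/2}\|f\|_{\varphi_{a-2}}$ supplied by Theorem \ref{thm:5.1} gives the second term of \eqref{eq:5.19}. The only real obstacle is bookkeeping: one must carefully track the exponent of $\delta$ through the rescaling and the constants arising from polar coordinates in $\C^n$, but no new ideas are required beyond the representation formula and Cauchy-Schwarz.
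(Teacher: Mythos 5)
Your argument reproduces the paper's derivation exactly: localize by the cutoff $\chi_0(|w|^2/\delta^2)$, apply the Newtonian-potential representation \eqref{eq:5.14}--\eqref{eq:5.15} to $\chi_z u$, bound the $f$-term by the $\omega_{2n}\delta$ polar integral, and handle the $\bar\partial\chi$-term by Cauchy--Schwarz against $e^{-\varphi_a}$ followed by the rescaling $w=\delta w'$ that yields $c_n\delta^{-n}$ and lets you invoke the H\"ormander bound $\|u\|_{\varphi_a}\le a^{-1/2}\|f\|_{\varphi_{a-2}}$. This is the same approach as in Section~\ref{sec:05}, and all the $\delta$-power and constant bookkeeping checks out.
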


In the proof of the Siciak-Zakharyuta theorem we will apply 
Theorem \ref{thm:5.2} and the following variant of 
Lemma 1.3.5 in Sigurdsson \cite{Sig:1986}:

\begin{lemma}\label{lem:5.3}
    Let $v\in {\mathcal C}^1(\C^n)$ 
and assume that there exist positive constants 
$a,C_1,C_2$  and a measurable function $\psi$  
such that for every $z\in \C^n$ we have
    \begin{gather}\label{eq:5.20}
\int_{B(z,1)} |v(w)|^2(1+|e^w|^2)^{-a}e^{-2\psi(w)} d\lambda(w)<C_1
\qquad   \text{  and } \\
\label{eq:5.21}
        |\overline{\partial} v(z)|\leq C_2 e^{\psi(z)}.
    \end{gather}
Then there exists a positive constant $C_3$  such that
    \begin{equation}\label{eq:5.22}
        |v(z)|\leq C_3(1+|e^z|)^a \sup_{w\in \B}e^{\psi(z+w)}, 
\qquad  z\in \C^n. 
    \end{equation}
    \end{lemma}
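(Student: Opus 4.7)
The plan is to apply the representation formula (\ref{eq:5.15}) from the proof of Theorem \ref{thm:5.2}, taken with $\delta=1$ and $u=v$, but to replace the global $L^2$ estimate used there with the local one provided by the hypothesis (\ref{eq:5.20}). With the same cut-off $\chi(w)=\chi_0(|w|^2)$ supported in the closed unit ball and equal to $1$ on $\{|w|\leq 1/2\}$, formula (\ref{eq:5.15}) yields a decomposition $v(z)=I_1(z)+I_2(z)$, where $I_1$ is an integral over $\{|w|\leq 1\}$ involving $\chi\,\bar\partial v$ and $I_2$ is an integral over the annulus $\{1/2\leq |w|\leq 1\}$ involving $v\,\bar\partial\chi$.

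The integral $I_1$ is estimated by a direct application of (\ref{eq:5.21}), exactly as in the derivation of the first term of (\ref{eq:5.19}), giving $|I_1(z)|\leq 2C_2\sup_{|w|\leq 1}e^{\psi(z-w)}$. For $I_2$ I would apply the Cauchy-Schwarz inequality after splitting
\begin{equation*}
v(z-w)=\bigl[v(z-w)(1+|e^{z-w}|^2)^{-a/2}e^{-\psi(z-w)}\bigr]\cdot\bigl[(1+|e^{z-w}|^2)^{a/2}e^{\psi(z-w)}\bigr].
\end{equation*}
After the change of variables $w'=z-w$, hypothesis (\ref{eq:5.20}) bounds the $L^2$ norm of the first bracket by $\sqrt{C_1}$. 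For the second bracket the key observation is that for $|w|\leq 1$ we have $|e^{z_j-w_j}|=|e^{z_j}|e^{-\operatorname{Re}w_j}\leq e\,|e^{z_j}|$, so $(1+|e^{z-w}|^2)^{a}\leq (1+e^2)^{a}(1+|e^z|^2)^{a}\leq C(1+|e^z|)^{2a}$. Factoring out $(1+|e^z|)^a\sup_{|w|\leq 1}e^{\psi(z-w)}$ and noting that $\int_{1/2\leq|w|\leq 1}|w|^{-4n+4}\,d\lambda(w)$ is the finite constant already appearing in $c_n$ of (\ref{eq:5.18}), one obtains $|I_2(z)|\leq C_4(1+|e^z|)^a\sup_{w\in\B}e^{\psi(z+w)}$ for some $C_4>0$.

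Combining the bounds on $I_1$ and $I_2$, trading $\sup_{|w|\leq 1}e^{\psi(z-w)}$ for $\sup_{w\in\B}e^{\psi(z+w)}$ via $w\mapsto -w$, and using $(1+|e^z|)^a\geq 1$ to absorb the constant from $I_1$, delivers (\ref{eq:5.22}) with $C_3=2C_2+C_4$. The main delicate point is the coordinate-wise weight comparison $|e^{z-w}|\leq e\,|e^z|$: it is elementary but essential, since the anisotropic weight $(1+|e^w|^2)^{-a}$ appearing in (\ref{eq:5.20}) does not transform directly under translations measured in the Euclidean norm and must be controlled coordinate-by-coordinate through $|\operatorname{Re}w_j|\leq|w|$.
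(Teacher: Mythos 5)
Your argument is correct and follows essentially the same route as the paper: applying the representation formula \eqref{eq:5.15} with $u=v$, $\delta=1$, $K=\mathbb B$, estimating the first term directly from \eqref{eq:5.21}, and applying Cauchy--Schwarz together with \eqref{eq:5.20} to the annulus term, using the elementary comparison $1+|e^{z+w}|^2\leq e^2(1+|e^z|^2)$ for $|w|\leq 1$ to absorb the exponential weight. You simply make explicit the coordinate-wise weight comparison that the paper leaves implicit in passing from the Cauchy--Schwarz bound to \eqref{eq:5.22}; the substance is identical.
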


 \begin{proof} Let $I(z)$ denote the square root of the integral 
in  (\ref{eq:5.20}).
Then, with the same notation as above, (\ref{eq:5.15}) 
holds with $u=v$, $\delta=1$,  and $K={\mathbb B}$, so by (\ref{eq:5.16})
\begin{equation*}
|v(z)|\leq \dfrac 2{\omega_{2n}} \int\limits_{{\mathbb B}}
|w|^{-2n+1}\, d\lambda(w) \sup_{w\in {\mathbb B}}|\bar\partial v(z+w)|
+  
\dfrac 4{\omega_{2n}} \int\limits_{\frac 12 \leq |w|\leq 1}
|w|^{-2n+2}|v(z-w)|\, d\lambda(w).
\end{equation*}
By (\ref{eq:5.21}) and the Cauchy-Schwarz inequality
\begin{equation*}
|v(z)|\leq 2C_2 
\sup_{w\in {\mathbb B}}e^{\psi(z+w)}
+  
\dfrac {4I(z)}{\omega_{2n}} 
\bigg(\int\limits_{\frac 12 \leq |w|\leq 1}
|w|^{-4n+4}(1+|e^{z+w}|^2)^{a}e^{2\psi(z+w)}\, d\lambda(w)\bigg)^{1/2}.
\end{equation*}
Finally, by (\ref{eq:5.18}) and (\ref{eq:5.20}) we get
\begin{equation*}
|v(z)|\leq \big(2C_2+\big(e^aC_1\big)^{1/2}\, c_n\big) 
(1+|e^z|)^{a}\sup_{w\in {\mathbb B}}e^{\psi(z+w)}, \qquad z\in \C^n,
\end{equation*}
and have proved (\ref{eq:5.22}).
    \end{proof}

\section{Proof of the Siciak-Zakharyuta theorem}
\label{sec:06}

Before we go into details of the proof of Theorem  \ref{thm:1.1}
we describe its main ideas. The inequality $V^S_{E,q}\geq \log\Phi^S_{E,q}$ is immeadiate. By  \cite{MagSigSigSno:2023}, Proposition
4.6, $V^S_{E,q}=V^S_{K,q}$ and
$\Phi^S_{E,q}=\Phi^S_{K,q}$ for some compact $K\subseteq E$, so it suffices to prove that $V^S_{K,q}= \log \Phi^S_{K,q}$ for a compact set $K\subseteq \C^n$. Since the admissible weight $q$ is lower-semicontinuous, there is a sequence of continuous functions $q_j\nearrow q$ pointwise on $K$. 
By \cite{MagSigSigSno:2023}, Proposition 4.8 (iii) we have $V^S_{K,q_j}\nearrow V^S_{K,q}$ and $\Phi^S_{K,q_j}\nearrow \Phi^S_{K,q}$, so it suffices to prove that $V^S_{K,q}\leq \log \Phi^S_{K,q}$ on $\C^{*n}$ for continuous weights $q$. Before we apply Theorem \ref{thm:1.2}, we reduce onto the case when $K$ avoids the union of the coordinate hyperplanes $H= \C^n\setminus \C^{*n}$.

\begin{lemma}\label{lem:6.1}
Let $S\subset \R^n_+$ be compact and convex with $0\in S$. Let $q$ be a continuous function on 
a compact subset $K$ of $\C^n$. 
Then there exists a sequence of 
compact subsets $K_j$ of $\C^{*n}$ and continuous functions $q_j$ on $\C^{*n}$ such that $V^{S*}_{K_j,q_j}\leq q_j$ on $K_j$ and
$$\lim_{j\to \infty}V^S_{K_j,q_j}=V^S_{K,q}\qquad\text{ and }\qquad \lim_{j\to \infty}\Phi^S_{K_j,q_j}= \Phi^S_{K,q}.$$ 

\end{lemma}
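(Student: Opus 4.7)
The plan is to approximate $(K,q)$ by pairs $(K_j,q_j)$, where $K_j\subset \C^{*n}$ is a compact set with non-empty interior and $q_j$ is continuous, so that the regularity inequality $V^{S*}_{K_j,q_j}\leq q_j$ becomes automatic from the $L$-regularity of $K_j$. First, I would extend $q$ continuously to $\tilde q\colon \C^n\to \R$ by the Tietze extension theorem, using that $q$ is continuous and bounded on the compact set $K$. Then, for each $j\in \N^*$, I would set
$$K_j=\{z\in \C^n\,;\, d(z,K)\leq 1/j\text{ and }|z_i|\geq 1/j\text{ for all }i=1,\dots,n\},$$
which is compact in $\C^{*n}$. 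Admissibility of $q$ (non-pluripolarity of $\{q<+\infty\}\cap K=K$, since $q$ is continuous) ensures $K\cap \C^{*n}\neq \emptyset$, so for $j$ large there is a point $z_0\in K$ with $|z_{0,i}|>2/j$ for all $i$, giving $B(z_0,1/j)\subseteq K_j$. Hence $K_j$ has non-empty interior for $j$ sufficiently large. Finally, set $q_j=\tilde q|_{K_j}$.

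The regularity condition $V^{S*}_{K_j,q_j}\leq q_j$ on $K_j$ would follow from the geometric properties of $K_j$. Since $K_j$ equals the closure of its interior (being a distance thickening intersected with an open condition), it is locally $L$-regular at every point. Combined with the continuity of $q_j$, classical pluripotential theory implies that $V^S_{K_j,q_j}$ is continuous on $\C^n$ and coincides with $V^{S*}_{K_j,q_j}$. The definitional inequality $V^S_{K_j,q_j}\leq q_j$ on $K_j$ then gives the required bound.

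For the convergences, I would argue separately. The convergence $\Phi^S_{K_j,q_j}\to \Phi^S_{K,q}$ follows from uniform continuity of each polynomial $p\in \mathcal P^S_m(\C^n)$ and of $\tilde q$ on a common compact neighborhood of $K$, showing that polynomials admissible for $(K,q)$ are approximately admissible for $(K_j,q_j)$ up to a factor tending to $1$, and conversely. The convergence $V^S_{K_j,q_j}\to V^S_{K,q}$ on $\C^{*n}$ uses the pluripolarity of $H=\C^n\setminus\C^{*n}$, which gives $V^{S*}_{K,q}=V^{S*}_{K\cap \C^{*n},q}$, together with classical continuity of extremal functions under Hausdorff convergence of nicely regular compact sets (as the $K_j$ approximate a thickened exhaustion of $K\cap \C^{*n}$).

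The main obstacle will be establishing the convergence of $V^S$ itself (not just $V^{S*}$) pointwise on $\C^{*n}$: since $V^S$ and $V^{S*}$ can differ on pluripolar subsets, one must leverage the continuity of $q$ and possibly refine the balance between the thickening radius and the hyperplane clearance in the definition of $K_j$ (allowing the two $1/j$'s to be independent parameters $\delta_j,\varepsilon_j\to 0$ with an appropriate relative rate) to ensure the gap between $V^S_{K_j,q_j}$ and $V^{S*}_{K_j,q_j}$ vanishes in the limit on $\C^{*n}$. The continuity of $\tilde q$ and the $L$-regularity of each $K_j$ are the essential tools for closing this gap.
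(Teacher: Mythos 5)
Your plan reproduces the right geometric picture (thicken $K$, cut a band around the coordinate hyperplanes, then argue the extremal functions converge), but it has two genuine gaps that the paper's proof closes in ways your sketch does not.

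First, the assertion that $K_j$, being the closure of its interior, is therefore locally $\mathcal{L}$-regular is false: regular-closedness does not imply $\mathcal{L}$-regularity (Lebesgue-spine-type phenomena produce sets that are closures of their interiors yet irregular). The paper gets local $\mathcal{L}$-regularity and continuity of the relevant extremal functions not from the shape of $K_j$ alone but from a deliberate fattening scheme, namely
$K_j = \bigl(E_j\setminus(H+\tfrac12\delta_j\B_\infty)\bigr)+\tfrac14\delta_j\overline\B_\infty$ together with $E_j = K+\delta_j\overline\B_\infty$, and then invokes the $\mathcal{L}$-regularity results of \cite{MagSigSigSno:2023} (Lemma 5.2 and Propositions 5.3, 5.4) for sets of exactly this form. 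Moreover, the paper does \emph{not} take $q_j$ to be a Tietze extension restricted to $K_j$; it takes $q_j = V^S_{E_j,q}$, which is what makes $V^{S*}_{K_j,q_j}\le q_j$ on $K_j$ actually come out, and also gives a globally defined function to compare against.

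Second, and more importantly, the convergence $V^S_{K_j,q_j}\to V^S_{K,q}$ is exactly the hard part and your sketch defers it to ``classical continuity of extremal functions under Hausdorff convergence of nicely regular compact sets.'' No such general theorem applies here: the sets $K_j$ are neither increasing nor decreasing, and they systematically omit the portion of $K$ near $H=\C^n\setminus\C^{*n}$, so a competitor $u\in\mathcal L^S(\C^n)$ with $u\le q_j$ on $K_j$ need not be controlled at those missing points of $K$ a priori. The paper closes this gap with a concrete sub-mean-value argument: for $z\in K\setminus K_j$ one rearranges coordinates so that the small ones are $z_1,\dots,z_\ell$, notes that the torus $z+\delta_j(\T^\ell\times\{0\})$ lies inside $K_j$ (this is why the $\ell^\infty$-thickening, not the Euclidean one, is used), and then bounds
$u(z)\le \max_{w\in\T^\ell\times\{0\}} q_j(z+\delta_j w)\le q(z)+\varepsilon$
using plurisubharmonicity of $u$ on each punctured disc together with uniform continuity of the Tietze extension of $q$. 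This torus-averaging step is the essential mechanism and is absent from your proposal; without it, or a substitute for it, the claimed convergence of $V^S_{K_j,q_j}$ does not follow. The remaining direction $V^S_{E_j,q}\to V^S_{K,q}$ is then handled by the already-established Proposition 4.8(iii) of \cite{MagSigSigSno:2023}, and the $\Phi^S$ convergence is the same argument restricted to competitors of the form $\log|p|^{1/m}$.
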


\begin{proof}
    Let $\varepsilon>0$ and  $\delta_j\searrow 0$, $j=1,2,\dots$. Write $E_j= K+\delta_j\overline{\B}_\infty$, where ${\B}_\infty$ is the open unit ball in the norm $\|\cdot\|_\infty$.  By Tietze's theorem, we can extend $q$ continuously to  $E_1$. Then there is a $j_\varepsilon$ such that for all $j>j_\varepsilon$ we have $\|q(x)-q(y)\|_\infty <\varepsilon$ whenever $x,y\in E_1$ and $\|x-y\|_\infty<2\delta_j$. Let $q_j=V^{S}_{E_j,q}$ and $K_j= (E_j\setminus(H+\frac{1}{2}\delta_j\mathbb{B}_\infty))+\frac{1}{4}\delta_j\overline{\mathbb{B}}_\infty$.

\smallskip
Let $v\in \L^S(\C^n)$ with $v|_{E_j}\leq q$. Then $v\leq V^S_{E_j, q}=q_j$ which implies  $V^S_{E_j,q}\leq V^S_{K_j,q_j}$. Let  $u\in \L^S(\C^n)$ with $u|_{K_j}\leq q_j$. If $z\in K\setminus K_j$ we can arrange the coordinates so $|z_k|< \frac{1}{2}\delta_j$ if $k\leq \ell $ and  $|z_k|\geq \frac{1}{2}\delta_j$ if $k>\ell$. If $w\in \T^\ell\times \{0\}\subset \C^n$ then $z+\delta_j w\in K_j$ and
$$u(z)\leq \max\limits_{w\in \T^\ell\times\{0\}} q_j (z+\delta_j w)\leq \max\limits_{w\in \T^\ell\times\{0\}}  q (z+\delta_j w)\leq q(z)+\varepsilon$$
if $j>j_\varepsilon$. Then $u-\varepsilon\leq q$ on $K$. This implies that $V^S_{K_j,q_j}-\varepsilon\leq V^S_{K,q}$ for all $j>j_\varepsilon$. 

\newpage

 Fix $z_0\in \C^n$. By \cite{MagSigSigSno:2023}, Proposition 4.8 (iii), there is a $j_0\geq j_\varepsilon$ such that for all $j\geq j_0$ we have $V^S_{E_{j}, q}(z_0)+\varepsilon \geq V^S_{K,q}(z_0)$. Then $|V^S_{K,q}(z_0)-V^S_{K_j,q_j}(z_0)|<\varepsilon$ as desired. The argument for $\Phi^S_{K,q}$ is  analoguous, by taking $u=\log|p|^{1/m}$ with $p\in \mathcal{P}^S_m(\C^n)$. 

 By \cite{MagSigSigSno:2023}, Lemma 5.2, Propositions 5.3 and 5.4 (ii), $E_j$ and $K_j$ are locally $\L$-regular,  $q^*_j= V^{S*}_{E_j,q}\leq q$ on $E_j$ and $ q_j=q_j^*\in \mathcal C (\C^{*n})$, therefore $V^{S*}_{K_j,q_j}\leq q_j$ on $K_j$.
\end{proof}

\smallskip
 
By Lemma \ref{lem:6.1} we may assume $K\subset \C^{*n}$ is compact and  $V^{S*}_{K,q}\leq q$ on $K$. 
By Theorem \ref{thm:1.2} there is a convex body $T\subset \R^\ell_+$ and a rational map $F_L\colon \C^{*n}\to \C^{*\ell}$ such that  $V^S_{K,q}={F}_L^* V^T_{K',q'}$ and $\Phi^S_{K,q}= {F}_L^* \Phi^T_{K', q'}$ on $\C^{*n}$, where  $q'={F}_{L*}{q}$ is an admissible weight on the compact set $K'={F}_L(K)\subset \C^{*\ell}$ by Proposition \ref{prop:2.1}. Since $F_L$ is proper, $V^{T*}_{K',q'}\leq q'$ on $K'$, thus by \cite{MagSigSigSno:2023}, Proposition 5.4, $V^{T}_{K',q'}$ is continuous on $\C^{*\ell}$.   

It follows that    
it is sufficient to prove that  $\log\Phi^S_{K,q}\geq V^S_{K,q}$ on $\C^{*n}$
if  $V^S_{K,q}$ is continuous on $\C^{*n}$, $S$ is a convex body and $K \subset \C^{*n}$ is compact. Write $V=V^{S*}_{K,q}$ and note that $V=V^S_{K,q}$ on $\C^{*n}$.

\smallskip
We will prove that for every $z_0\in \C^{*n}$
and $\varepsilon >0$   
there exists a $p\in \OO(\C^n)$ such that for some  $m\in \N^*$,  $0 <a_m < d(mS,\N^n\setminus mS)$ and a constant $C>0$  we have
\begin{gather}
  \label{eq:6.1}
  |p(z)| \leq C(1+|z|)^{a_m} e^{mH_S(z)}, \qquad z\in \C^n,  \\
  \label{eq:6.2}
|p(z)|\leq e^{mV(z)}, \qquad z\in K,  \quad \text{ and } \\
  \label{eq:6.3}
|p(z_0)|=e^{m(V(z_0)-3\varepsilon)}.
\end{gather} 
Then, by (\ref{eq:6.1}) and Proposition 3.5 in
\cite{MagSigSigSno:2023}, 
we have
$p\in {\mathcal P}^S_m(\C^n)$.  Since  $V\leq q$ on $K$, 
(\ref{eq:6.2})
implies that $\|pe^{-mq}\|_K\leq 1$ and (\ref{eq:6.3}) implies
that 
$$\log\Phi^S_{K,q}(z_0) \geq \log\Phi^S_{K,q,m}(z_0) \geq 
\log|p(z_0)|^{1/m}= V(z_0)-3\varepsilon.
$$
Since $z_0$  and $\varepsilon>0$ are arbitrary  we conclude that 
$\log\Phi^S_{K,q}\geq V$ on $\C^{*n}$.

\smallskip
We will apply Theorem \ref{thm:5.1} to show that  
$p$ can be chosen of the form
\begin{equation}
  \label{eq:6.4}
  p(z)=e^{m(V(z_0)-3\varepsilon)}\big(\chi(z)-u(z)\big), \qquad z\in \C^n,
\end{equation}
where $\chi(z)=\chi_0(|z-z_0|^2/\gamma^2)$,  with 
$\chi_0\in \mathcal{C}^\infty(\R)$, $\chi_0(x)=1$ for $x\leq \tfrac 14$, 
$\chi_0$ decreasing, $\chi_0=0$ for $x\geq 1$, 
and $2\gamma$ is chosen smaller than the distance from $z_0$ to the
coordinate hyperplanes.
Then $\bar\partial u=\bar\partial \chi$
and we will choose a certain weight function 
$\varphi\in \PSH(\C^n)$ such that 
\begin{equation}
  \label{eq:6.5}
  \|u\|_{\varphi_{a_m}}\leq a_m^{-1/2} \|\bar \partial \chi\|_{\varphi_{{a_m}-2}}.
\end{equation}
Observe that $\chi(z)=1$ for $|z-z_0|\leq \tfrac 12 \gamma$, 
$0\leq \chi\leq 1$, $\chi(z)=0$ for $|z-z_0|\geq \gamma$,
$\bar \partial_j\chi(z)=\chi_0'(|z-z_0|^2/\gamma^2)(z_j-z_{0,j})/\gamma^2$ 
and that $\chi_0$ can be chosen such that $|\chi_0'|\leq 2$.

\begin{prooftx}{Proof of Theorem \ref{thm:1.1}}
As we have already noted,  we may assume that $S$ is a convex body and $K$ is a compact subset of $\C^{*n}$ and 
it is sufficient to prove that 
for every $z_0\in \C^{*n}$ and $\varepsilon>0$  there
exist $m\in \N^*$ and  $p\in \OO(\C^n)$, such that 
(\ref{eq:6.1}), (\ref{eq:6.2}) and (\ref{eq:6.3}) hold.

\smallskip
Recall that the function  $z\mapsto H_S(z)-\max_{w\in K}H(w)+\min_{w\in K}q(w)$ is
in $\L^S(\C^n)$ and is $\leq q$ on $K$.  This tells us that 
 $V\in \L^S_+(\C^n)$, i.e.~for some $c_V\in \R$ we have
\begin{equation}
  \label{eq:6.6}
  -c_V+H_S(z) \leq V(z) \leq c_V+H_S(z), \qquad z\in \C^n.
\end{equation}
Since $V$ is continuous on $\C^{*n}$ there exists  $\gamma>0$ such that  
\begin{equation}
  \label{eq:6.7}
|V(z+w)-V(z)|<\varepsilon , \qquad z\in  K\cup \{z_0\}, \quad |w|\leq 2\gamma.
\end{equation}
Since $S$ is a convex body, there exists a closed ball 
$\bar B(s_0,2\varrho_0)\subseteq S$.  Then
for every $t\in \bar B(s_0,\varrho_0)$ the closed ball 
$\bar B(t,\varrho_0)$ is contained in $S$.  Its 
supporting  function is 
$\R^n\ni \xi\mapsto \varrho_0|\xi|+\scalar t\xi
\leq \varphi_S(\xi)$, so we have
\begin{equation*}
\varrho_0|\Log\,  z|+\scalar {t}{\Log\,  z}
\leq H_S(z), \qquad t\in B(s_0,\varrho_0), \ \  z\in \C^{*n}.  
\end{equation*}
Observe that if $|z|>|z_0|$ then
$$
\log|z-z_0|\leq \log(|z|+|z_0|)\leq \log|z|+\log 2\leq
\log\|z\|_\infty+\log(2\sqrt n). 
$$
We have $\log\|z\|_\infty\leq |\Log \, z|$ for every $z\in \C^n$, 
so if we choose $c_0\leq -\varrho_0\log(2\sqrt n)$ such that 
$\varrho_0\log|z-z_0|+c_0\leq 0$ for $|z|\leq |z_0|$ then it follows
that
\begin{equation}
\label{eq:6.8}
  \varrho_0\log|z-z_0|+\scalar {t}{\Log\,  z}+c_0 \leq H_S(z),
  \qquad t\in B(s_0,\varrho_0), \ \ z\in \C^{*n}.
\end{equation}
Now we fix $t_0=s_0+\varrho_0 {\mathbf 1}/n\in \bar B(s_0,\varrho_0)$
and observe that for every $m>n/\varrho_0$  
we have $\varphi\in \PSH(\C^n)$, where 
\begin{align}
  \label{eq:6.9}
\varphi(z)= 2(m-n/\varrho_0)(V(z)-\varepsilon)
+(2n/\varrho_0)\big(\varrho_0\log|z-z_0|+ \scalar{t_0}{ \Log\, z} +
   c_0\big).
\end{align}
We choose $\chi$ as described after (\ref{eq:6.4}) and
let $u$ be a solution
of   $\bar \partial u= \bar \partial \chi$  satisfying 
(\ref{eq:6.5}).   Before we make our choice of $m$ let us 
show that (\ref{eq:6.1}) and (\ref{eq:6.3}) hold for every 
$m> n/\varrho_0$.

\smallskip
Since $\varphi(z)= 2n\log|z-z_0|+\psi(z)$,  
where the function $\psi$ is bounded near $z_0$, 
the function $e^{-\varphi}$ is not locally integrable in any neighborhood
of $z_0$ and we conclude that $u(z_0)=0$.  Hence (\ref{eq:6.3}) holds.
By (\ref{eq:6.6}) and (\ref{eq:6.8}) with $t=s_0$ we have for every
$z\in \C^n$  that   
\begin{align*}
  -\varphi(z)+2(m-n/\varrho_0)c_V 
&\geq -2mH_S(z) -2\scalar{\mathbf{1}}{ \Log\, z}\\
&+(2n/\varrho_0)\Big( H_S(z) -\varrho_0\log|z-z_0|-\scalar{s_0}{\Log\, z}-c_0
\Big)
\\
&\geq -2mH_S(z) -2\scalar{\mathbf{1}}{ \Log\, z},
\end{align*}
so  (\ref{eq:6.5}) implies
\begin{equation}
  \label{eq:6.10}
\int_{\C^n} |u(z)|^2(1+|z|^2)^{-{a_m}}  
e^{-2mH_S(z)-2\langle \mathbf{1}, \Log\, z \rangle} d\lambda(z)
\leq e^{2(m-n/\varrho_0)c_V}\|u\|_{\varphi_{a_m}}^2<+\infty.
\end{equation}
The Jacobi determinant of 
$\zeta\mapsto e^\zeta=(e^{\zeta_1},\dots,e^{\zeta_n})$ viewed as a mapping
$\R^{2n}\to \R^{2n}$ is equal to 
$|e^{\zeta_1}\cdots e^{\zeta_n}|^2=e^{2\scalar{{\mathbf 1}}\xi}$.
We define $v\colon \C^n\to \C$ by
$v(\zeta)=u(e^{\zeta})$, write
$\zeta=\xi+i\eta$, for $\xi,\eta\in \R^n$, fix 
$\zeta_0=\xi_0+i\eta_0\in \C^n$
such that  $z_0=e^{\zeta_0}$,
and set 
$$
A=\big(\R\times [\eta_{0,1}-\pi,\eta_{0,1}+\pi]\big)\times\cdots\times 
\big(\R\times [\eta_{0,n}-\pi,\eta_{0,n}+\pi]\big).
$$
By (\ref{eq:6.10}) we have 
\begin{multline*}
\int_{A} |v(\zeta)|^2  (1+|e^{\zeta}|^2)^{-{a_m}}
e^{-2m\varphi_S(\xi)} d\lambda(\zeta)
=
\int_{A} |v(\zeta)|^2 (1+|e^\xi|^2)^{-{a_m}}
e^{-2m\varphi_S(\xi)-2\scalar{\mathbf 1}\xi} e^{2\scalar{\mathbf 1}\xi}
  d\lambda(\zeta) \\
=  \int_{\C^n} |u(z)|^2(1+|z|^2)^{-{a_m}}  
e^{-2mH_S(z)-2\langle \mathbf{1}, \Log\, z \rangle} d\lambda(z)
<+\infty.
\end{multline*}
Since  $\bar\partial u$ has compact support it follows that
there exists a constant $C_2>0$ such that 
\begin{equation*}
|\bar \partial v(\zeta)|\leq C_2 \leq C_2e^{m\varphi_S(\xi)}, \qquad
\zeta=\xi+i\eta\in \C^n.
\end{equation*}
By Lemma \ref{lem:5.3} there exists a constant $C_3>0$ such that 
for every $\zeta=\xi+i\eta\in \C^n$ 
\begin{equation*}
  |v(\zeta)| \leq C_3\big(1+|e^\zeta|\big)^{a_m}
\sup_{w\in {\mathbb B}}e^{m\varphi_S(\xi+\Re\,  w)}
\leq C_4\big(1+|e^\zeta|\big)^{a_m}e^{m\varphi_S(\xi)}, 
\end{equation*}
where $C_4=C_3\sup_{w\in {\mathbb B}}e^{m\varphi_S(\Re \, w)}$.
We change the coordinates back to $z=e^\zeta$, use the fact that
$\Log\, z=\xi$ and conclude that (\ref{eq:6.1}) holds for $p$ as in \eqref{eq:6.4}.

\smallskip
It remains to show that  if $m$ is large enough then 
(\ref{eq:6.2}) holds.   
By Theorem \ref{thm:5.2} we have for every $z\in \C^n$ and $\delta>0$
that
   \begin{multline}
    \label{eq:6.11}
    |p(z)|
\leq e^{m(V(z_0)-3\varepsilon)}\bigg( \chi(z)+
2\delta \cdot \sup_{|w|\leq \delta}|\bar\partial\chi (z+w)|\\
+a_m^{-1/2}c_n\delta^{-n}
\|\bar\partial \chi\|_{\varphi_{{a_m}-2}} 
\sup_{|w|\leq \delta} e^{\varphi_{a_m}(z+w)/2}\bigg),
  \end{multline}
where the constant $c_n$ is given by (\ref{eq:5.18}).
We take $\delta=\gamma$ and $z\in K$. 
If  $z\in \supp \chi$, then (\ref{eq:6.7}) implies that
$V(z_0)-\varepsilon <V(z)$ and we get 
\begin{equation}
\label{eq:6.12}
  e^{m(V(z_0)-3\varepsilon)}\chi(z) \leq e^{-m\varepsilon}e^{mV(z)}.
\end{equation}
This estimate trivially holds for $z\not \in \supp \chi$.
If $z+w\in \supp\bar\partial \chi$ for some $w$ with $|w|\leq \gamma$, then
$|z-z_0|<2\gamma$, so by (\ref{eq:6.7}) 
we have $V(z_0)-\varepsilon <V(z)$
and since 
$|\bar\partial \chi|\leq 2/\gamma$ we get
\begin{equation}
  \label{eq:6.13}
  e^{m(V(z_0)-3\varepsilon)}2\gamma \sup_{|w|\leq
    \gamma}|\bar\partial \chi(z+w)| \leq 4e^{-m\varepsilon}e^{mV(z)}.
\end{equation}
If $z+w\not \in \supp\bar\partial \chi$ then this estimate holds for every $|w|\leq \gamma$.  Next we observe that by (\ref{eq:6.6}),
(\ref{eq:6.7}) and (\ref{eq:6.8}) with $t=t_0$ we have for $z\in K$ and $K_\gamma=\{z\in \C^n \,;\, d(z,K)\leq \gamma\}$
\begin{align}
\label{eq:6.14}
e^{\varphi_{a_m}(z+w)/2}
&\leq \sup_{\zeta\in K_\gamma}(1+|\zeta|^2)^{{a_m}/2}
e^{(m-n/\varrho_0)(V(z+w)-\varepsilon)+(n/\varrho_0)H(z+w)}\\
&\leq \sup_{\zeta\in K_\gamma}(1+|\zeta|)^{a_m}
e^{mV(z)+(n/\varrho_0)(\varepsilon +c_V)}.  
\nonumber
\end{align}
If we combine (\ref{eq:6.11}), 
(\ref{eq:6.12}), (\ref{eq:6.13}) and (\ref{eq:6.14}), 
then we have the estimate
\begin{multline*}
    |p(z)|
\leq e^{-m\varepsilon}\bigg( 5
+a_m^{-1/2}c_n\delta^{-n}
e^{(n/\varrho_0)(\varepsilon+c_V)}\sup_{\zeta\in K_\gamma}(1+|\zeta|)^{a_m} 
e^{m(V(z_0)-2\varepsilon)}\|\bar\partial \chi\|_{\varphi_{{a_m}-2}} 
\bigg)
e^{mV(z)}.
\end{multline*}
If we can prove that 
$e^{m(V(z_0)-2\varepsilon)}\|\bar\partial \chi\|_{\varphi_{{a_m}-2}}$ is 
a bounded function of $m$ then it 
follows that we can choose $m$ sufficiently large for 
(\ref{eq:6.2}) to hold.  For every $\zeta\in \supp \bar \partial
\chi$ we have 
$V(z_0)-2\varepsilon \leq V(\zeta)-\varepsilon \leq V(z_0)$, so 
\begin{multline*}
e^{2m(V(z_0)-2\varepsilon)}\|\bar\partial \chi\|_{\varphi_{{a_m}-2}}^2
\leq \int\limits_{\frac 12\gamma\leq |\zeta-z_0|\leq \gamma}
|\bar\partial \chi(\zeta)|^2(1+|\zeta|^2)^{2}e^{(2n/\varrho_0)V(z_0)}
\\ \times |\zeta-z_0|^{-2n}
\big(|\zeta_1|^{t_{0,1}}\cdots
|\zeta_n|^{t_{0,n}}\big)^{-2n/\varrho_0} e^{-2nc_0/\varrho_0}
\, d\lambda(\zeta).
\end{multline*}
Since $2\gamma$ is smaller than the distance 
from $z_0$ to the coordinate hyperplanes, we have $|\zeta_j|\geq \gamma$ 
for  $|\zeta-z_0|\leq \gamma$, so the integral is convergent
and independent of $m$.
\end{prooftx}

{\small 
\bibliographystyle{siam}
\bibliography{rs_bibref}

\smallskip\noindent
Science Institute,
University of Iceland,
IS-107 Reykjav\'ik,
ICELAND\\
bsm@hi.is, alfheidur@hi.is, ragnar@hi.is
}

\end{document}